\newtheorem{theorem}{Theorem}
\theoremstyle{plain}
\newtheorem{definition}{Definition}
\newtheorem{lemma}{Lemma}
\newtheorem{remark}{Remark}
\numberwithin{equation}{section}
\begin{document}
\title[Superquadratic functions and averages]{Superquadratic functions and
refinements of inequalities between averages}
\author{S. Abramovich}
\address{Department of Mathematics\\
University of Haifa\\
Haifa, 31905, Israel}
\email{abramos@math.haifa.ac.il}
\author{J. Bari\'{c}}
\address{FESB, University of Split\\
Ru\dj era Bo\v{s}kovi\'{c}a b.b., 21000 Split\\
Croatia}
\email{jbaric@fesb.hr}
\author{M. Mati\'{c}}
\address{Department of Mathematics\\
Faculty of Natural Sciences, Mathematics and Education\\
University of Split\\
Teslina 12, 21000 Split\\
Croatia}
\email{mmatic@pmfst.hr}
\author{J. Pe\v{c}ari\'{c}}
\address{Faculty of Textile Technology\\
University of Zagreb\\
Pierottijeva 6, 10000 Zagreb}
\email{pecaric@element.hr}
\thanks{}
\date{October 24, 2011}
\subjclass[2000]{26A51, 26D15}
\keywords{inequalities, superquadratic functions, sums, powers, Jensen's
inequalities, convex functions.}
\dedicatory{ }

\begin{abstract}

In this paper upper bounds are given for the successive differences $A_{n+1}-A_{n}$ and B$_{n}-B_{n-1}$
where $A_{n}=1/\left( n-1\right) \tsum_{r=1}^{n-1}f\left( r/n\right)$, 
$B_{n}=1/\left( n+1\right) \tsum_{r=0}^{n}f\left( r/n\right)$ and $f$ is superquadratic function.
We obtain bounds for the successive differences of the more general sequence
$1/c_{n}\tsum_{r=1}^{n}f\left( a_{r}/b_{n}\right)$ when $f$ is superquadratic, 
which refine known results for convex functions.
We also obtain bounds for various successive differences when $f$ is an
increasing subquadratic function.
\end{abstract}

\maketitle

\section{Introduction}

We define the averages%
\begin{equation*}
A_{n}(f)=\tfrac{1}{n-1}\tsum_{r=1}^{n-1}f\left( \tfrac{r}{n}\right) ,\quad
n\geq 2
\end{equation*}%
and%
\begin{equation*}
B_{n}\left( f\right) =\tfrac{1}{n+1}\tsum_{r=0}^{n}f\left( \tfrac{r}{n}\right)
,\quad n\geq 1.
\end{equation*}%
In \cite{BJ} (see also \cite{B}) it was shown that if $f$ is convex, then $%
A_{n}(f)$\ increases with $n$ and $B_{n}\left( f\right) $\ decreases. In 
\cite{AJS2}, for the class of superquadratic functions, 
the theorems of \cite{BJ} are generalized in the following Theorem
A and Theorem B.

\medskip \textbf{Theorem A} \cite{AJS2} \textit{%
If }$f$\textit{\ is superquadratic on }$\left[ 0,1\right] $\textit{\ then
for }$n\geq 2$%
\begin{equation*}
A_{n+1}\left( f\right) -A_{n}\left( f\right) \geq f\left( \tfrac{1}{3n}%
\right) +\tsum_{r=1}^{n-1}\lambda _{r}f\left( y_{r}\right) ,
\end{equation*}%
\textit{where }$y_{r}=\tfrac{\left\vert 2n-1-3r\right\vert }{3n\left(
n+1\right) }.$ \textit{Moreover if }$f$\textit{\ \ is superquadratic and
non-negative, then for }$n\geq 3$%
\begin{equation*}
A_{n+1}\left( f\right) -A_{n}\left( f\right) \geq f\left( \tfrac{1}{3n}%
\right) +f\left( \tfrac{16}{81\left( n+3\right) }\right) .
\end{equation*}

\textbf{Theorem B} \cite{AJS2} \textit{If }$f$%
\textit{\ is superquadratic on} $[0,1]$, \textit{then for }$n\geq 2$%
\begin{equation*}
B_{n-1}\left( f\right) -B_{n}\left( f\right) \geq f\left( \tfrac{1}{3n}%
\right) +\tsum_{r=1}^{n}\lambda _{r}f\left( y_{r}\right) ,
\end{equation*}%
\textit{where }$y_{r}=\tfrac{\left\vert 2n+1-3r\right\vert }{3n\left(
n-1\right) }.$\textit{\ Moreover, if }$f$\textit{\ is also non-negative,
then for }$n\geq 2$%
\begin{equation*}
B_{n-1}\left( f\right) -B_{n}\left( f\right) \geq f\left( \tfrac{1}{3n}%
\right) +f\left( \tfrac{16}{81n}\right) .
\end{equation*}

\medskip In this article we find upper bounds for the difference $%
A_{n+1}\left( f\right) -A_{n}\left( f\right) $\ and for the difference $%
B_{n-1}\left( f\right) -B_{n}\left( f\right) .$\ \ We also generalize 
the lower bounds obtained in \cite{CQCD}, \cite{EP} 
and \cite{QG}.

Now we present the class of superquadratic functions $f$ that we use in
this paper to get our results related to differences of averages. This was
introduced in \cite{AJS1} and\ \cite{AJS2}, and dealt with in \cite{ABKB}, 
\cite{ABM}, \cite{ABMP}, \cite{BPV}, \cite{BV}, \cite{GT}, \cite{OP} and
other papers.

\begin{definition}
\label{D1}\cite{AJS1,AJS2} A function $f$, defined on an
interval \ $I=\left[ 0,L\right] $ or $\left[ 0,\infty \right) $ is
superquadratic, if for each $x$ in $I$, there exists a real number $C\left(
x\right) $ such that 
\begin{equation}
f\left( y\right) -f\left( x\right) \geq C\left( x\right) \left( y-x\right)
+f\left( \left\vert y-x\right\vert \right)  \label{1.1}
\end{equation}%
for all $y\in I.$ A function $f$ \ is subquadratic if $-f$ is superquadratic.
\end{definition}

As stated in the following Lemma A, positive superquadratic functions are
also convex, increasing and satisfy $f\left( 0\right) =0$ (like $f\left(
x\right) =x^{m},$ $m\geq 2$). \ Therefore the results obtained in this paper
leads to refinements of results in \cite{BJ}, \cite{CQCD} and \cite{QG}.

\medskip

\textbf{Lemma A} \cite{AJS1} \textit{Let }$f$\textit{\ be a
superquadratic function with }$C\left( x\right) $\textit{\ as in Definition %
\ref{D1}.} Then \newline
\textit{(i) \ }$f\left( 0\right) \leq 0,$\newline
\textit{(ii) \ if }$f\left( 0\right) =f^{\prime }\left( 0\right) =0,$\textit{%
\ then }$C\left( x\right) =f^{\prime }\left( x\right) $\textit{\ whenever }$%
f $\textit{\ is differentiable at }$x>0$,\newline
\textit{(iii) if }$f\left( x\right) \geq 0$\textit{, }$x\in I$,\textit{\ then 
}$f$\textit{\ is convex on }$I$ \textit{and }$f\left( 0\right) =f^{\prime
}\left( 0\right) =0$\textit{\ .}

\medskip

\textbf{Lemma B} \cite{AJS2} \textit{Suppose
that }$f$\textit{\ is superquadratic. Let }$x_{r}\geq 0,$\textit{\ }$1\leq
r\leq n$\textit{\ \ and let }$\overline{x}=\tsum_{r=1}^{n}\lambda _{r}x_{r}$%
\textit{\ where }$\lambda _{r}\geq 0,$\textit{\ and }$\tsum_{r=1}^{n}\lambda
_{r}=1.$\textit{\ Then }%
\begin{equation}
\tsum_{r=1}^{n}\lambda _{r}f\left( x_{r}\right) \geq f\left( \overline{x}%
\right) +\tsum_{r=1}^{n}\lambda _{r}f\left( \left\vert x_{r}-\overline{x}%
\right\vert \right) .  \label{1.2}
\end{equation}%
\textit{If \ }$f\left( x\right) $\textit{\ is subquadratic the reverse
inequality to} (\ref{1.2}) \textit{holds}.

\medskip

From Lemma B we get the immediate result which we state in the following
Lemma 1, by repeating (\ref{1.2}) $t$ times.

\medskip

\begin{lemma}
\label{L1} Let $f$\ be superquadratic on \ $\left[ 0,L\right] $\ and let \ $%
x,y\in \left[ 0,L\right] $, $0\leq \lambda \leq 1$. Then 
\begin{eqnarray}
&&\lambda f\left( x\right) +\left( 1-\lambda \right) f\left( y\right)  \notag
\\
&\geq &f\left( \lambda x+\left( 1-\lambda \right) y\right) +\lambda f\left(
\left( 1-\lambda \right) \left\vert y-x\right\vert \right) +\left( 1-\lambda
\right) f\left( \lambda \left\vert y-x\right\vert \right)  \notag \\
&\geq &f\left( \lambda x+\left( 1-\lambda \right) y\right)
+\tsum_{k=0}^{t-1}f\left( 2\lambda \left( 1-\lambda \right) \left\vert
1-2\lambda \right\vert ^{k}\left\vert x-y\right\vert \right) +  \notag \\
&&\lambda f\left( \left( 1-\lambda \right) \left\vert 1-2\lambda \right\vert
^{t}\left\vert x-y\right\vert \right) +\left( 1-\lambda \right) f\left(
\lambda \left\vert 1-2\lambda \right\vert ^{t}\left\vert x-y\right\vert
\right) .  \label{1.3}
\end{eqnarray}%
If $f$ is positive superquadratic we get from (\ref{1.3}) that%
\begin{eqnarray}
&&\lambda f\left( x\right) +\left( 1-\lambda \right) f\left( y\right)  \notag
\\
&\geq &f\left( \lambda x+\left( 1-\lambda \right) y\right)
+\tsum_{k=0}^{t-1}f\left( 2\lambda \left( 1-\lambda \right) \left\vert
1-2\lambda \right\vert ^{k}\left\vert x-y\right\vert \right)  \label{1.4}
\end{eqnarray}%
If $f$ is subquadratic we get the reverse inequality of (\ref{1.3}). 

\end{lemma}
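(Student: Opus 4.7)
The plan is to iterate Lemma B repeatedly, each application converting the pair of remainder terms at the end of the inequality into one new telescoping summand plus a fresh pair of remainders of the same shape. The first inequality in \eqref{1.3} is obtained directly from Lemma B with $n=2$, $x_1=x$, $x_2=y$ and weights $\lambda_1=\lambda$, $\lambda_2=1-\lambda$: the centroid is $\overline{x}=\lambda x+(1-\lambda)y$, so $|x_1-\overline{x}|=(1-\lambda)|x-y|$ and $|x_2-\overline{x}|=\lambda|x-y|$, and substituting into \eqref{1.2} gives the claim. This is also the $t=0$ case of the second inequality in \eqref{1.3}, since the sum $\sum_{k=0}^{-1}$ is empty.

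For the second inequality I would proceed by induction on $t$. Assuming \eqref{1.3} holds at level $t$, I would apply Lemma B once more, with weights $\lambda$, $1-\lambda$, to the two remainder points $u=(1-\lambda)|1-2\lambda|^t|x-y|$ and $v=\lambda|1-2\lambda|^t|x-y|$. A short computation gives $\lambda u+(1-\lambda)v=2\lambda(1-\lambda)|1-2\lambda|^t|x-y|$ and $|u-v|=|1-2\lambda|^{t+1}|x-y|$. Substituting into \eqref{1.2} produces precisely the new summand $f\bigl(2\lambda(1-\lambda)|1-2\lambda|^t|x-y|\bigr)$, which is the $k=t$ term of the sum, together with the two remainders prescribed at level $t+1$, closing the induction.

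Inequality \eqref{1.4} then drops out of \eqref{1.3} in the positive superquadratic case, because Lemma A(iii) guarantees $f\geq 0$ (with $f(0)=0$), so the two remainder terms in \eqref{1.3} are non-negative and can simply be discarded. The subquadratic version follows by applying the superquadratic version to $-f$, which reverses every inequality.

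I expect the only real care to lie in the bookkeeping of the exponent of $|1-2\lambda|$: one must confirm that at each inductive step the freshly produced summand is exactly the next one demanded by the sum in \eqref{1.3}, and that the leftovers retain precisely the shape needed to feed back into Lemma B. Since the centroid/deviation computation is identical at every step, no new difficulty arises beyond the base case.
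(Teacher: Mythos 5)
Your proposal is correct and is exactly the argument the paper intends: the authors state that Lemma \ref{L1} follows ``by repeating (\ref{1.2}) $t$ times,'' and your induction --- applying Lemma B with weights $\lambda$, $1-\lambda$ to the two remainder points at each stage, with the centroid $2\lambda(1-\lambda)|1-2\lambda|^{t}|x-y|$ and deviation $|1-2\lambda|^{t+1}|x-y|$ --- is precisely that iteration, worked out in the detail the paper omits. The base case, the passage to (\ref{1.4}) via Lemma A(iii), and the subquadratic reversal via $-f$ are all as the paper would have them.
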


From Lemma 1 we get the following result for a positive superquadratic
functions.

\begin{lemma}
\label{L2}Let $f$ be a positive superquadratic function on $\left[ 0,L\right]
$. Let $A_{i}\in \left[ 0,L\right] ,$ $0\leq \lambda _{i}\leq 1,$ $%
i=1,...,m. $ Then%
\begin{eqnarray}
&&\tsum_{i=1}^{m}\left[ \lambda _{i}f\left( \left( 1-\lambda _{i}\right)
A_{i}\right) +\left( 1-\lambda _{i}\right) f\left( \lambda _{i}A_{i}\right) %
\right]  \notag \\
&\geq &\tsum_{k=0}^{t}mf\left( \tfrac{\tsum_{i=1}^{m}2\lambda _{i}\left(
1-\lambda _{i}\right) \left\vert 1-2\lambda _{i}\right\vert ^{k}A_{i}}{m}%
\right) ,\qquad t=0,1,2,...  \label{1.6}
\end{eqnarray}%
If $\lambda _{i}A_{i}\geq A,$ \ $i=1,...,m,$ \ then%
\begin{eqnarray}
&&\tsum_{i=1}^{m}\left[ \lambda _{i}f\left( \left( 1-\lambda _{i}\right)
A_{i}\right) +\left( 1-\lambda _{i}\right) f\left( \lambda _{i}A_{i}\right) %
\right]  \notag \\
&\geq &\tsum_{k=0}^{t}mf\left( \tfrac{\tsum_{i=1}^{m}2\left( 1-\lambda
_{i}\right) \left\vert 1-2\lambda _{i}\right\vert ^{k}A}{m}\right) .
\label{1.7}
\end{eqnarray}
\end{lemma}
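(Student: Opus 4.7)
The plan is to apply inequality (\ref{1.4}) of Lemma \ref{L1} to each summand separately, then reorder the resulting double sum and apply Jensen's inequality for the (automatically convex) function $f$.

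First, for each $i$, I would apply (\ref{1.4}) with the substitutions $x := (1-\lambda_i)A_i$, $y := \lambda_i A_i$, and weight $\lambda := \lambda_i$. The left-hand side of (\ref{1.4}) then matches the $i$-th summand $\lambda_i f((1-\lambda_i)A_i) + (1-\lambda_i) f(\lambda_i A_i)$ of the expression to be bounded. On the right-hand side, the elementary computations $\lambda_i x + (1-\lambda_i) y = 2\lambda_i(1-\lambda_i) A_i$ and $|x-y| = |1-2\lambda_i| A_i$ produce $f(2\lambda_i(1-\lambda_i)A_i) + \tsum_{k=0}^{t-1} f(2\lambda_i(1-\lambda_i)|1-2\lambda_i|^{k+1} A_i)$. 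The isolated first term coincides with the $k=0$ entry of a single sum whose exponent is one smaller, so a reindexing $k \mapsto k-1$ in the $\sum$ absorbs it and gives
\begin{equation*}
\lambda_i f((1-\lambda_i)A_i) + (1-\lambda_i) f(\lambda_i A_i) \;\geq\; \tsum_{k=0}^{t} f\!\left(2\lambda_i(1-\lambda_i)|1-2\lambda_i|^{k} A_i\right).
\end{equation*}

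Second, I would sum this pointwise bound over $i = 1, \ldots, m$ and swap the order of summation. For each fixed $k$, since $f$ is convex by Lemma A(iii), Jensen's inequality yields $\tsum_{i=1}^m f(z_i) \geq m\, f\bigl(\tfrac{1}{m}\tsum_{i=1}^m z_i\bigr)$. Applied with $z_i = 2\lambda_i(1-\lambda_i)|1-2\lambda_i|^k A_i$, this produces exactly the right-hand side of (\ref{1.6}).

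Finally, for (\ref{1.7}), I would use the fact that positive superquadratic functions are increasing (stated just before Lemma A). Writing $2\lambda_i(1-\lambda_i)|1-2\lambda_i|^k A_i = 2(1-\lambda_i)|1-2\lambda_i|^k (\lambda_i A_i)$ and invoking the hypothesis $\lambda_i A_i \geq A$ gives the termwise lower bound $2(1-\lambda_i)|1-2\lambda_i|^k A$ for the averaged argument, whereupon monotonicity of $f$ transfers the inequality under the outer $f$. The only delicate point in the whole argument is the reindexing in the first step that merges the isolated summand into the running sum; everything else is a direct chain of Lemma \ref{L1} and Jensen's inequality.
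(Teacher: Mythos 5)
Your proof is correct and follows the route the paper intends (the paper states Lemma \ref{L2} without proof, noting only that it follows from Lemma \ref{L1}): applying (\ref{1.4}) with $x=(1-\lambda_i)A_i$, $y=\lambda_i A_i$, $\lambda=\lambda_i$, absorbing the term $f(\lambda x+(1-\lambda)y)=f(2\lambda_i(1-\lambda_i)A_i)$ as the $k=0$ entry of the sum, and then using Jensen for the convex $f$ (Lemma A(iii)) over $i$ for each fixed $k$ is exactly the natural derivation, and the monotonicity argument for (\ref{1.7}) is likewise sound.
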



\section{\protect\medskip Superquadracity, subquadracity and upper bounds of
averages}


\medskip

In the following theorem we establish an upper bounds for the
differences $A_{n+1}\left( f\right) -A_{n}\left( f\right) $  and 
$B_{n-1}\left( f\right) -B_{n}\left( f\right)$ for superquadratic
functions whereas in [6] and [2], lower bounds were established for convex
functions and superquadratic functions respectively.

\begin{theorem}
\label{T4}Let f be a superquadratic function on $\left[ 0,1\right]$. Then
for $1\leq r\leq n,$ $n\geq 3$, we get%
\begin{eqnarray}
&&A_{n+1}\left( f\right) -A_{n}(f)  \notag \\
&\leq &\tfrac{1}{2}\left[ f\left( \tfrac{1}{n+1}\right) +f\left( \tfrac{n}{%
n+1}\right) \right] -\tsum_{r=1}^{n-1}\left[ \tfrac{2r}{n\left( n-1\right) }%
f\left( \tfrac{n-r-1}{n+1}\right) +\tfrac{1}{n-1}f\left( \tfrac{r}{n}\right) %
\right] .  \label{2.29}
\end{eqnarray}%
Moreover, if $f$ is also positive, then%
\begin{equation}
A_{n+1}\left( f\right) -A_{n}\left( f\right) \leq \tfrac{1}{2}\left[ f\left( 
\tfrac{1}{n+1}\right) +f\left( \tfrac{n}{n+1}\right) \right] -\left[ f\left( 
\tfrac{n-2}{3\left( n+1\right) }\right) +f\left( \tfrac{1}{2}\right) \right]
\label{2.30}
\end{equation}
\end{theorem}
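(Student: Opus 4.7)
The plan is to exploit Lemma B in its upper-bound form: rearranging $\tsum_{r}\lambda_{r}f(x_{r})\geq f(\overline{x})+\tsum_{r}\lambda_{r}f(\vert x_{r}-\overline{x}\vert)$ gives an upper bound for $f$ at any convex combination $\overline{x}$. The natural decomposition that produces the endpoint values $f(1/(n+1))$ and $f(n/(n+1))$ appearing on the right of~(\ref{2.29}) is to write each interior abscissa $r/(n+1)$ as a convex combination of $1/(n+1)$ and $n/(n+1)$, apply Lemma B termwise, and average.

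Concretely, for $r=2,\dots,n-1$ one checks
\[
\tfrac{r}{n+1}=\tfrac{n-r}{n-1}\cdot\tfrac{1}{n+1}+\tfrac{r-1}{n-1}\cdot\tfrac{n}{n+1},
\]
with $\vert\tfrac{1}{n+1}-\tfrac{r}{n+1}\vert=\tfrac{r-1}{n+1}$ and $\vert\tfrac{n}{n+1}-\tfrac{r}{n+1}\vert=\tfrac{n-r}{n+1}$. Applying Lemma B, summing with weight $1/n$ over $r=2,\dots,n-1$, and keeping the $r=1,n$ terms intact, the coefficient of $f(1/(n+1))$ comes out to $\tfrac{1}{n}+\tfrac{1}{n}\tsum_{r=2}^{n-1}\tfrac{n-r}{n-1}=\tfrac{1}{n}+\tfrac{n-2}{2n}=\tfrac{1}{2}$ and similarly for $f(n/(n+1))$. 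This yields
\[
A_{n+1}(f)\leq \tfrac{1}{2}\bigl[f(\tfrac{1}{n+1})+f(\tfrac{n}{n+1})\bigr]-E_{n},
\]
where $E_{n}=\tfrac{1}{n(n-1)}\tsum_{r=2}^{n-1}\bigl[(n-r)f(\tfrac{r-1}{n+1})+(r-1)f(\tfrac{n-r}{n+1})\bigr]$ collects the $f(\vert x_{r}-\overline{x}\vert)$ corrections.

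Reindexing (put $s=r-1$ in the first piece and $s=n-r$ in the second) collapses $E_{n}$ to $\tfrac{2}{n(n-1)}\tsum_{s=1}^{n-2}(n-s-1)\,f(\tfrac{s}{n+1})$. Substituting $s=n-r-1$ this equals $\tsum_{r=1}^{n-1}\tfrac{2r}{n(n-1)}f(\tfrac{n-r-1}{n+1})-\tfrac{2}{n}f(0)$; since $f(0)\leq 0$ by Lemma A(i), replacing $E_{n}$ with the larger sum only weakens the inequality. Subtracting $A_{n}(f)=\tfrac{1}{n-1}\tsum_{r=1}^{n-1}f(r/n)$ from both sides then produces~(\ref{2.29}).

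For~(\ref{2.30}), a positive superquadratic $f$ is convex with $f(0)=0$ by Lemma A(iii), so Jensen's inequality applies separately to each of the two subsums on the right of~(\ref{2.29}). The weights $2r/(n(n-1))$ sum to $1$, and the identity $\tsum_{r=1}^{n-1}r(n-r-1)=n(n-1)(n-2)/6$ shows the weighted mean of $(n-r-1)/(n+1)$ equals $(n-2)/(3(n+1))$, whence $\tsum_{r=1}^{n-1}\tfrac{2r}{n(n-1)}f(\tfrac{n-r-1}{n+1})\geq f(\tfrac{n-2}{3(n+1)})$. Analogously $A_{n}(f)\geq f(1/2)$ since the uniform mean of $r/n$ for $r=1,\dots,n-1$ is $1/2$. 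Adding these two lower bounds on the subtracted part of~(\ref{2.29}) gives~(\ref{2.30}). The chief difficulty throughout is bookkeeping: tracking every coefficient through the double summation and the two reindexings, rather than any deeper inequality.
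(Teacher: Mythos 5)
Your proposal is correct and follows essentially the same route as the paper: the same convex combination $\tfrac{r}{n+1}=\tfrac{n-r}{n-1}\cdot\tfrac{1}{n+1}+\tfrac{r-1}{n-1}\cdot\tfrac{n}{n+1}$ fed into Lemma B, the same reindexing to the single sum $\tsum_r \tfrac{2r}{n(n-1)}f\bigl(\tfrac{n-r-1}{n+1}\bigr)$, and the same Jensen step for the positive case; the only cosmetic difference is that the paper runs $r$ over all of $1,\dots,n$ (so the $f(0)$ terms sit inside the sum exactly) while you exclude $r=1,n$ and restore the $f(0)$ contribution via $f(0)\leq 0$, which is equally valid.
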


\begin{proof}
As $f$ is a superquadratic function on $\left[ 0,1\right] ,$ then by
inserting in Lemma \ref{L1} $\ x=\tfrac{1}{n+1},\ y=\tfrac{n}{n+1},$ $\lambda =%
\tfrac{n-r}{n-1},$ $r=1,...,n$ \ we get%
\begin{equation}
f\left( \tfrac{r}{n+1}\right) \leq \tfrac{n-r}{n-1}f\left( \tfrac{1}{n+1}%
\right) +\tfrac{r-1}{n-1}f\left( \tfrac{n}{n+1}\right) -\left[ \tfrac{n-r}{%
n-1}f\left( \tfrac{r-1}{n+1}\right) +\tfrac{r-1}{n-1}f\left( \tfrac{n-r}{n+1}%
\right) \right] .  \label{2.31}
\end{equation}%
Hence 
\begin{eqnarray}
&&\tsum_{r=1}^{n}f\left( \tfrac{r}{n+1}\right)  \notag \\
&\leq &\tsum_{r=1}^{n}\left[ \tfrac{n-r}{n-1}f\left( \tfrac{1}{n+1}\right) +%
\tfrac{r-1}{n-1}f\left( \tfrac{n}{n+1}\right) \right] -\tsum_{r=1}^{n}\left[ 
\tfrac{n-r}{n-1}f\left( \tfrac{r-1}{n+1}\right) +\tfrac{r-1}{n-1}f\left( 
\tfrac{n-r}{n+1}\right) \right]  \notag \\
&=&\tfrac{n}{2}\left[ f\left( \tfrac{1}{n+1}\right) +f\left( \tfrac{n}{n+1}%
\right) \right] -\tsum_{r=1}^{n}\left[ \tfrac{n-r}{n-1}f\left( \tfrac{r-1}{%
n+1}\right) +\tfrac{r-1}{n-1}f\left( \tfrac{n-r}{n+1}\right) \right]  \notag
\\
&=&\tfrac{n}{2}\left[ f\left( \tfrac{1}{n+1}\right) +f\left( \tfrac{n}{n+1}%
\right) \right] -\tsum_{r=1}^{n-1}\tfrac{2r}{n-1}f\left( \tfrac{n-r-1}{n+1}%
\right) .  \label{2.32}
\end{eqnarray}%
From (\ref{2.32}) we get (\ref{2.29}).

If the superquadratic $f$ is also positive on $\left[ 0,1\right] $ then $f$
is convex and from (\ref{2.29}) we get
\begin{eqnarray*}
A_{n+1}\left( f\right) -A_{n}\left( f\right) &\leq &\tfrac{1}{2}\left[
f\left( \tfrac{1}{n+1}\right) +f\left( \tfrac{n}{n+1}\right) \right] -\left[
f\left( \tsum_{r=1}^{n-1}\tfrac{2\left( r-1\right) \left( n-r\right) }{%
n\left( n-1\right) \left( n+1\right) }\right) +f\left( \tfrac{1}{2}\right) %
\right] \\
&=&\tfrac{1}{2}\left[ f\left( \tfrac{1}{n+1}\right) +f\left( \tfrac{n}{n+1}%
\right) \right] -\left[ f\left( \tfrac{n-2}{3\left( n+1\right) }\right)
+f\left( \tfrac{1}{2}\right) \right] .
\end{eqnarray*}%
Hence, (\ref{2.30}) holds. This completes the proof of the theorem.
\end{proof}


In the following theorem we establish an upper bound for the difference $%
B_{n-1}\left( f\right) -B_{n}\left( f\right) $ \ by similar reasoning to
those used in Theorem \ref{T4}. The proof is omitted here.

\begin{theorem}
\label{T5}Let $f$ be a positive superquadratic function on $[0,1]$. Then 
\begin{equation*}
B_{n-1}\left( f\right) -B_{n}\left( f\right) \leq \tfrac{n-1}{2n}\left[
f\left( \tfrac{1}{n-1}\right) +f\left( 1\right) \right] -\tfrac{n-3}{n}%
f\left( \tfrac{1}{3}\right) -f\left( \tfrac{1}{2}\right) .
\end{equation*}
\end{theorem}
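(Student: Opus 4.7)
The plan is to mimic the argument of Theorem~\ref{T4}: I would apply Lemma~\ref{L1} at each point $r/(n-1)$ to produce an upper bound involving $f(1/(n-1))$, $f(1)$ and two correction terms; sum over $r$; divide by $n$; subtract $B_n(f)$; and finally, using that positivity forces $f$ to be convex with $f(0)=0$ (Lemma~A(iii)), invoke Jensen's inequality on each of the remaining negative sums so that they collapse to $f(1/3)$ and $f(1/2)$.

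Concretely, I would insert $x=\tfrac{1}{n-1}$, $y=1$ and $\lambda_r=\tfrac{n-1-r}{n-2}$ (for $r=1,\dots,n-1$) in (\ref{1.3}) with $t=0$. A direct check shows $\lambda_r x+(1-\lambda_r)y=\tfrac{r}{n-1}$, $(1-\lambda_r)|y-x|=\tfrac{r-1}{n-1}$ and $\lambda_r|y-x|=\tfrac{n-1-r}{n-1}$, so rearranging (\ref{1.3}) gives
\[
f\!\left(\tfrac{r}{n-1}\right)\le \lambda_r f\!\left(\tfrac{1}{n-1}\right)+(1-\lambda_r)f(1)-\lambda_r f\!\left(\tfrac{r-1}{n-1}\right)-(1-\lambda_r)f\!\left(\tfrac{n-1-r}{n-1}\right).
\]
Summing over $r$, using $\tsum_r\lambda_r=\tsum_r(1-\lambda_r)=\tfrac{n-1}{2}$ and $f(0)=0$, and reindexing (by $s=r-1$ in the first correction sum and by $s=n-1-r$ in the second), the two correction sums will coincide; combined with $nB_{n-1}(f)=\tsum_{r=1}^{n-1}f(r/(n-1))$ this yields
\[
B_{n-1}(f)\le \tfrac{n-1}{2n}\bigl[f(\tfrac{1}{n-1})+f(1)\bigr]-\tfrac{2}{n}\tsum_{s=1}^{n-3}\tfrac{n-2-s}{n-2}\,f\!\left(\tfrac{s}{n-1}\right).
\]

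Next I would subtract $B_n(f)$ and apply Jensen's inequality (by convexity) to each negative sum. For the $B_n$ part, the arithmetic mean of $\{r/n\}_{r=0}^{n}$ is $1/2$, so $B_n(f)\ge f(1/2)$. For the other sum, the renormalised weights $\tfrac{2(n-2-s)}{(n-3)(n-2)}$ on $s=1,\dots,n-3$ sum to~$1$, and using the identity $\tsum_{s=1}^{n-3}s(n-2-s)=\tfrac{(n-3)(n-2)(n-1)}{6}$ one sees that their weighted average of $s/(n-1)$ is exactly~$1/3$, whence
\[
\tfrac{2}{n}\tsum_{s=1}^{n-3}\tfrac{n-2-s}{n-2}\,f\!\left(\tfrac{s}{n-1}\right)\ge \tfrac{n-3}{n}f(\tfrac{1}{3}).
\]
Substituting these two lower bounds into the preceding display produces the claimed inequality.

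The main obstacle is combinatorial rather than conceptual: one must guess parameters $(x,y,\lambda_r)$ in Lemma~\ref{L1} so that, after summation and reindexing, the two correction sums merge into a single symmetric expression whose Jensen-average lands precisely at~$1/3$. The choice above is forced by the wish to see $f(\tfrac{1}{n-1})$ and $f(1)$ on the right-hand side of the theorem; the pleasant identity $\tsum s(n-2-s)=\tfrac{(n-3)(n-2)(n-1)}{6}$ then conspires to deliver the constant~$1/3$, and the remaining $-f(1/2)$ comes essentially for free from Jensen applied to $B_n(f)$.
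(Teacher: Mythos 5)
Your proposal is correct and is essentially the proof the paper has in mind: the paper omits the proof of Theorem~\ref{T5}, describing it as analogous to Theorem~\ref{T4}, and your argument is exactly that analogue (apply Lemma~\ref{L1} with the extreme points $\tfrac{1}{n-1}$ and $1$, sum, merge the two correction sums, and use convexity/Jensen to collapse them to $\tfrac{n-3}{n}f(\tfrac13)$ and $f(\tfrac12)$). All the computations check out, including the identity $\tsum_{s=1}^{n-3}s(n-2-s)=\tfrac{(n-3)(n-2)(n-1)}{6}$ that yields the average $\tfrac13$.
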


\begin{remark}
\label{R3} The arguments in Theorems \ref{T4} and \ref{T5} can be generalized
to an upper bound of \ $\tsum_{i=1}^{m}f\left( \tfrac{a_{i}}{a_n}\right) $ where$\
\left( a_{i}\right) _{i\geq 1}\ $is$\ $positive \ increasing sequence and \ $f\ $\ is
superquadratic function. Therefore, putting in Lemma \ref{L1} that  \ $\lambda =%
\tfrac{a_{n}-a_{i}}{a_{n}-a_{1}},$ $x=\tfrac{a_{i}}{a_{n}},$ $y=\tfrac{a_{n}%
}{a_{n}}=1,$ it follows
\begin{equation*}
f\left( \tfrac{a_{i}}{a_{n}}\right) \leq \tfrac{a_{n}-a_{i}}{a_{n}-a_{1}}%
f\left( \tfrac{a_{1}}{a_{n}}\right) +\tfrac{a_{i}-a_{1}}{a_{n}-a_{1}}f\left(
1\right) -\tfrac{a_{n}-a_{i}}{a_{n}-a_{1}}f\left( \tfrac{a_{i}-a_{1}}{a_{n}}%
\right) -\tfrac{a_{i}-a_{1}}{a_{n}-a_{1}}f\left( \tfrac{a_{n}-a_{i}}{a_{n}}%
\right) .
\end{equation*}%
Hence,

\begin{eqnarray*}
\tsum_{i=1}^{m}f\left( \tfrac{a_{i}}{a_{n}}\right) &\leq &f\left( \tfrac{%
a_{1}}{a_{n}}\right) \tsum_{i=1}^{m}\tfrac{a_{n}-a_{i}}{a_{n}-a_{1}}+f\left(
1\right) \tsum_{i=1}^{m}\tfrac{a_{i}-a_{1}}{a_{n}-a_{1}} \\
&&-\tsum_{i=1}^{m}\left[ \tfrac{a_{n}-a_{i}}{a_{n}-a_{1}}f\left( \tfrac{%
a_{i}-a_{1}}{a_{n}}\right) +\tfrac{a_{i}-a_{1}}{a_{n}-a_{1}}f\left( \tfrac{%
a_{n}-a_{i}}{a_{n}}\right) \right] .
\end{eqnarray*}%
If $f$ is also positive and therefore convex, we get from the last
inequality that 
\begin{eqnarray}
\tsum_{i=1}^{m}f\left( \tfrac{a_{i}}{a_{n}}\right) &\leq &f\left( \tfrac{%
a_{1}}{a_{n}}\right) \tsum_{i=1}^{m}\tfrac{a_{n}-a_{i}}{a_{n}-a_{1}}+f\left(
1\right) \tsum_{i=1}^{m}\tfrac{a_{i}-a_{1}}{a_{n}-a_{1}}  \notag \\
&&-mf\left( \tsum_{i=1}^{m}\tfrac{2\left( a_{n}-a_{i}\right) \left(
a_{i}-a_{1}\right) }{\left( a_{n}-a_{1}\right) a_{n}m}\right) .  \label{2.33}
\end{eqnarray}%

In theorems \ref{T4} and \ref{T5} we simplified the last inequality
according to the specific $m$, $n,$ and $\left( a_{i}\right) _{i\geq 1}.$

If $a_{i},$ $i=1,...,n$ is a general positive increasing sequence we get
from (\ref{2.33}), using $2\left( a_{n}-a_{i}\right) \left(
a_{i}-a_{1}\right) >2\left( a_{n}-a_{n-1}\right) \left( a_{2}-a_{1}\right) ,$
that for $m=n$ \ the inequality 
\begin{eqnarray}
\tsum_{i=1}^{n}f\left( \tfrac{a_{i}}{a_{n}}\right) &\leq &f\left( \tfrac{%
a_{1}}{a_{n}}\right) \left( \tfrac{na_{n}-\tsum_{i=1}^{n}a_{i}}{a_{n}-a_{1}}%
\right) +f\left( 1\right) \left( \tfrac{\tsum_{i=1}^{n}a_{i}-na_{1}}{\left(
a_{n}-a_{1}\right) }\right)  \notag \\
&&-nf\left( \tfrac{2\left( a_{n}-a_{n-1}\right) \left( a_{2}-a_{1}\right) }{%
\left( a_{n}-a_{1}\right) a_{n}}\right)  \label{2.34}
\end{eqnarray}%
holds. Since for convex functions we have%
\begin{equation}
\tsum_{i=1}^{n}f\left( \tfrac{a_{i}}{a_{n}}\right) \geq
n\tsum_{i=1}^{n}f\left( \tfrac{\tsum_{i=1}^{n}a_{i}}{na_{n}}\right) ,
\label{2.35}
\end{equation}
from (\ref{2.34}) and (\ref{2.35}) the following results directly follows.
\end{remark}

\begin{theorem}
\label{T6}Let $f$ be a positive superquadratic function on $\left[ 0,1\right]
$ and let  $(a_{i})_{i\in \mathbb{N}}$ be a positive increasing sequence. Then%
\begin{eqnarray*}
&&\tfrac{1}{n}\tsum_{i=1}^{n}f\left( \tfrac{a_{i}}{a_{n}}\right) -\tfrac{1}{%
n+1}\tsum_{i=1}^{n+1}f\left( \tfrac{a_{i}}{a_{n+1}}\right) \\
&\leq &\tfrac{na_{n}-\tsum_{i=1}^{n}a_{i}}{n\left( a_{n}-a_{1}\right) }%
f\left( \tfrac{a_{1}}{a_{n}}\right) +\tfrac{\left(
\tsum_{i=1}^{n}a_{i}\right) -na_{1}}{n\left( a_{n}-a_{1}\right) }f\left(
1\right) \\
&&-f\left( \tfrac{2\left( a_{n}-a_{n-1}\right) \left( a_{2}-a_{1}\right) }{%
\left( a_{n}-a_{1}\right) a_{n}}\right) -f\left( \tfrac{\tsum_{i=1}^{n+1}a_{i}%
}{\left( n+1\right) a_{n+1}}\right) .
\end{eqnarray*}
\end{theorem}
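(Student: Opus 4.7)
The plan is to assemble Theorem \ref{T6} directly from the two displayed inequalities (\ref{2.34}) and (\ref{2.35}) already proved in Remark \ref{R3}. The strategy is simply: use (\ref{2.34}) to produce an upper bound for $\tfrac{1}{n}\tsum_{i=1}^{n}f(a_{i}/a_{n})$, and use Jensen's inequality (applied with $n+1$ terms rather than $n$) to produce a matching lower bound for $\tfrac{1}{n+1}\tsum_{i=1}^{n+1}f(a_{i}/a_{n+1})$. Subtracting the second from the first reproduces the right-hand side of the theorem term by term.

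First I would divide (\ref{2.34}) by $n$ to obtain
\[
\tfrac{1}{n}\tsum_{i=1}^{n}f\!\left(\tfrac{a_{i}}{a_{n}}\right) \le
\tfrac{na_{n}-\tsum_{i=1}^{n}a_{i}}{n(a_{n}-a_{1})}f\!\left(\tfrac{a_{1}}{a_{n}}\right)
+\tfrac{\tsum_{i=1}^{n}a_{i}-na_{1}}{n(a_{n}-a_{1})}f(1)
-f\!\left(\tfrac{2(a_{n}-a_{n-1})(a_{2}-a_{1})}{(a_{n}-a_{1})a_{n}}\right).
\]
This step is free: positivity of $f$ on $[0,1]$ together with Lemma A(iii) gives convexity, which is precisely what was used in the remark to pass from the raw Lemma~\ref{L1} estimate to (\ref{2.34}). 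Since $(a_i)$ is increasing, the multiplier $2(a_n-a_i)(a_i-a_1)$ is bounded below by $2(a_n-a_{n-1})(a_2-a_1)$, so the monotonicity of $f$ (positive superquadratic $\Rightarrow$ increasing with $f(0)=0$) justifies the replacement inside the last $f$.

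Next I would apply Jensen's inequality to $f$ on the $n+1$ points $a_1/a_{n+1},\ldots,a_{n+1}/a_{n+1}$ with equal weights $1/(n+1)$, namely the $n\mapsto n+1$ case of (\ref{2.35}), to get
\[
\tfrac{1}{n+1}\tsum_{i=1}^{n+1}f\!\left(\tfrac{a_{i}}{a_{n+1}}\right) \ge
f\!\left(\tfrac{\tsum_{i=1}^{n+1}a_{i}}{(n+1)a_{n+1}}\right).
\]
Subtracting this from the previous display yields exactly the inequality claimed in Theorem~\ref{T6}.

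There is essentially no obstacle here, since all the heavy lifting was done in Remark~\ref{R3}: Lemma~\ref{L1} produced the pointwise upper bound $f(a_i/a_n)\le\cdots$ that summed to (\ref{2.34}), and convexity produced the Jensen lower bound. The only point to be careful about is bookkeeping — applying Jensen with $n+1$ terms (not $n$), and normalizing (\ref{2.34}) by $1/n$ rather than $1/(n-1)$ — so that the indices in the argument $\tsum_{i=1}^{n+1}a_{i}/((n+1)a_{n+1})$ line up with the average $\tfrac{1}{n+1}\tsum_{i=1}^{n+1}f(a_i/a_{n+1})$ that appears in the theorem.
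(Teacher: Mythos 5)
Your proposal is correct and is exactly the paper's own argument: the paper derives Theorem \ref{T6} by the single sentence ``from (\ref{2.34}) and (\ref{2.35}) the following results directly follows,'' and you have simply spelled out that combination — dividing (\ref{2.34}) by $n$ and subtracting the $(n+1)$-point Jensen bound (\ref{2.35}). The bookkeeping you flag (Jensen with $n+1$ terms, normalization by $1/n$) is handled correctly, so nothing is missing.
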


\begin{theorem}
\label{T7}Let $f$ be a positive superquadratic function on $\left[ 0,1\right]
.$ Let $(a_{i})_{i\in \mathbb{N}}$ be a positive increasing sequence and $(c_{i})_{i\in \mathbb{N}}$ 
be a positive sequence. Then%
\begin{eqnarray*}
&&\tfrac{1}{c_{n}}\tsum_{i=1}^{n}f\left( \tfrac{a_{i}}{a_{n}}\right) -\tfrac{%
1}{c_{n+1}}\tsum_{i=1}^{n+1}f\left( \tfrac{a_{i}}{a_{n+1}}\right) \\
&\leq &\tfrac{na_{n}-\tsum_{i=1}^{n}a_{i}}{\left( a_{n}-a_{1}\right) }\cdot 
\tfrac{f\left( \tfrac{a_{1}}{a_{n}}\right) }{c_{n}}+\tfrac{%
\tsum_{i=1}^{n}a_{i}-na_{1}}{n\left( a_{n}-a_{1}\right) }\cdot \tfrac{f\left(
1\right) }{c_{n}} \\
&&-\tfrac{n}{c_{n}}f\left( \tfrac{2\left( a_{n}-a_{n-1}\right) \left(
a_{2}-a_{1}\right) }{\left( a_{n}-a_{1}\right) a_{n}}\right) -\tfrac{n+1}{%
c_{n+1}}f\left( \tfrac{\tsum_{i=1}^{n+1}a_{i}}{\left( n+1\right) a_{n+1}}%
\right) .
\end{eqnarray*}
\end{theorem}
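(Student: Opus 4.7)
The plan is to derive Theorem \ref{T7} as a direct weighted combination of the two building-block inequalities already established in Remark \ref{R3}, namely (\ref{2.34}) (the upper bound coming from positive superquadraticity) and (\ref{2.35}) (the lower bound coming from convexity via Jensen). Essentially, Theorem \ref{T6} corresponds to the special choice $c_n=n$, and the extra flexibility here is just the replacement of the weight $1/n$ by an arbitrary positive weight $1/c_n$.

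First I would apply (\ref{2.34}) at level $n$ and divide through by $c_n$, obtaining
\begin{equation*}
\tfrac{1}{c_{n}}\tsum_{i=1}^{n}f\!\left(\tfrac{a_{i}}{a_{n}}\right)\leq \tfrac{na_{n}-\tsum_{i=1}^{n}a_{i}}{c_{n}(a_{n}-a_{1})}f\!\left(\tfrac{a_{1}}{a_{n}}\right)+\tfrac{\tsum_{i=1}^{n}a_{i}-na_{1}}{c_{n}(a_{n}-a_{1})}f(1)-\tfrac{n}{c_{n}}f\!\left(\tfrac{2(a_{n}-a_{n-1})(a_{2}-a_{1})}{(a_{n}-a_{1})a_{n}}\right).
\end{equation*}
Next, because positive superquadratic functions are convex (Lemma A(iii)), the Jensen inequality (\ref{2.35}) applied at level $n+1$ yields
\begin{equation*}
\tsum_{i=1}^{n+1}f\!\left(\tfrac{a_{i}}{a_{n+1}}\right)\geq (n+1)\,f\!\left(\tfrac{\tsum_{i=1}^{n+1}a_{i}}{(n+1)a_{n+1}}\right),
\end{equation*}
which, after multiplication by $-1/c_{n+1}$, gives an upper estimate for $-\frac{1}{c_{n+1}}\sum_{i=1}^{n+1}f(a_i/a_{n+1})$.

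Adding these two inequalities produces exactly the bound stated in Theorem \ref{T7}: the first two terms come from the $f(a_1/a_n)$ and $f(1)$ contributions of (\ref{2.34}) rescaled by $1/c_n$, the $-\frac{n}{c_n}f(\cdot)$ term is the superquadratic correction from (\ref{2.34}), and the $-\frac{n+1}{c_{n+1}}f(\cdot)$ term is the Jensen lower-bound contribution at level $n+1$. Since both inequalities were already justified in Remark \ref{R3}, no new analytic work is needed; the only care required is bookkeeping of indices and denominators, which is essentially the same computation that yielded Theorem \ref{T6}. There is no genuine obstacle here beyond the clerical check that all the weights line up correctly, since the sequence $(c_i)$ enters only as a positive normalising factor and plays no role in the convexity or superquadraticity arguments themselves.
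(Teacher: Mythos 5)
Your route is precisely the one the paper intends: the authors give no separate proof of Theorem \ref{T7}, saying only that it (like Theorem \ref{T6}) ``directly follows'' from (\ref{2.34}) and (\ref{2.35}), which is exactly the weighted combination you carry out. One caveat, however: your assertion that adding the two inequalities produces \emph{exactly} the stated bound does not survive the bookkeeping you yourself flag as the only delicate point. Dividing (\ref{2.34}) by $c_{n}$ puts the coefficient $\frac{\sum_{i=1}^{n}a_{i}-na_{1}}{c_{n}\left( a_{n}-a_{1}\right) }$ in front of $f(1)$, whereas the printed statement has $\frac{\sum_{i=1}^{n}a_{i}-na_{1}}{n\left( a_{n}-a_{1}\right) }\cdot \frac{1}{c_{n}}$, i.e.\ an extra factor $\frac{1}{n}$. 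Since $\sum_{i=1}^{n}a_{i}-na_{1}\geq 0$ and $f(1)\geq 0$, the printed bound is the \emph{smaller} of the two, hence a strictly stronger claim than what your derivation (or the paper's) actually establishes. Almost certainly the extra $n$ is a typo carried over from Theorem \ref{T6} (the case $c_{n}=n$), as the asymmetry with the first term --- which carries no such $n$ --- also suggests; but as written, your argument proves the inequality with $\frac{\sum_{i=1}^{n}a_{i}-na_{1}}{c_{n}\left( a_{n}-a_{1}\right) }f(1)$ as the second term, not the statement as printed, and you should say so rather than claim exact agreement.
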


In the following theorem we prove some inequalities for $A_{n}\left(
f\right) $ and \ $B_{n}\left( f\right) $\ for increasing subquadratic
functions. 

First we state a lemma that follows immediately from Lemma B for
subquadratic functions.

\begin{lemma}
\label{L3}Let $f$ be increasing and subquadratic, and let $\ $%
\begin{equation}
x_{r}\leq 2\tsum_{i=1}^{n}\lambda _{i}x_{i},\qquad r=1,...,n  \label{3.14}
\end{equation}%
where $\lambda _{i}\geq 0,$\ $x_{i}\geq 0,$\ $i=1,...,n,$\ $%
\tsum_{i=1}^{n}\lambda _{i}=1.$ Then\ \ \ 
\begin{equation}
\tsum_{r=1}^{n}\lambda _{r}f\left( x_{r}\right) \leq 2f\left(
\tsum_{r=1}^{n}\lambda _{r}\left( x_{r}\right) \right).  \label{3.15}
\end{equation}%
In particular, if $\max \{x_{r}:r=1,...,n\}\leq 2\min \{x_{r}:r=1,...,n\}$,
then (\ref{3.14}) holds and therefore (\ref{3.15}) holds.
\end{lemma}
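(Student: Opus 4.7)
The plan is to deduce Lemma \ref{L3} directly from Lemma B applied to the subquadratic function $f$. Since $f$ is subquadratic, Lemma B gives the reverse of (\ref{1.2}), namely
\begin{equation*}
\tsum_{r=1}^{n}\lambda _{r}f\left( x_{r}\right) \leq f\left( \overline{x}\right) +\tsum_{r=1}^{n}\lambda _{r}f\left( \left\vert x_{r}-\overline{x}\right\vert \right) ,
\end{equation*}
where $\overline{x}=\tsum_{i=1}^{n}\lambda _{i}x_{i}$. The strategy is then to use the hypotheses to bound each term $f(|x_r-\overline{x}|)$ by $f(\overline{x})$, so that the right-hand side collapses to $2f(\overline{x})$.

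The crucial observation is that hypothesis (\ref{3.14}) forces $|x_r-\overline{x}|\leq \overline{x}$ for every $r$. Indeed, if $x_r\geq\overline{x}$, then (\ref{3.14}) gives $x_r-\overline{x}\leq 2\overline{x}-\overline{x}=\overline{x}$; and if $x_r<\overline{x}$, then $\overline{x}-x_r\leq\overline{x}$ simply because $x_r\geq 0$. Since $f$ is increasing, this yields $f(|x_r-\overline{x}|)\leq f(\overline{x})$ for each $r$, and combining with the Jensen-type inequality above produces
\begin{equation*}
\tsum_{r=1}^{n}\lambda _{r}f(x_{r})\leq f(\overline{x})+\tsum_{r=1}^{n}\lambda _{r}f(\overline{x})=2f(\overline{x}),
\end{equation*}
which is exactly (\ref{3.15}).

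For the final assertion, the plan is to verify (\ref{3.14}) under the stronger pointwise assumption $\max\{x_r\}\leq 2\min\{x_r\}$. Since $\min\{x_r\}\leq\overline{x}$ (the weighted mean lies above the minimum), we get $x_r\leq \max\{x_r\}\leq 2\min\{x_r\}\leq 2\overline{x}$ for every $r$, so (\ref{3.14}) holds and the previous part of the lemma applies.

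No step looks to be a real obstacle: the only thing worth checking carefully is the case distinction $x_r\gtrless \overline{x}$ to confirm that non-negativity of $x_r$ together with (\ref{3.14}) really does imply $|x_r-\overline{x}|\leq\overline{x}$; the rest is a direct substitution into the subquadratic version of Lemma B together with monotonicity of $f$.
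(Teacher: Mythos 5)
Your proof is correct and follows exactly the route the paper intends: it states that Lemma \ref{L3} ``follows immediately from Lemma B for subquadratic functions,'' and your argument is precisely the reversed inequality (\ref{1.2}) combined with the observation that (\ref{3.14}) and $x_r\geq 0$ force $|x_r-\overline{x}|\leq\overline{x}$, so that monotonicity of $f$ collapses the correction terms to $f(\overline{x})$. The case analysis and the verification that $\max\{x_r\}\leq 2\min\{x_r\}$ implies (\ref{3.14}) are both sound.
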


For subquadratic \ increasing functions (which are therefore also
non-negative) the proofs of the following Theorem \ref{T10} for $A_{n}\left(
f\right) $ and $B_{n}\left( f\right) $\ show that (\ref{3.15}) holds as (\ref%
{3.14}) always holds. The bounds given here for subquadratic functions are
not the best possible, but as they are easy to obtain we state these bounds
and prove them.

\begin{theorem}
\label{T10}Let $\ f$\ be increasing subquadratic function on $[0,1].$ \ Then
for $\ n\geq 2$%
\begin{equation}
A_{n+1}\left( f\right) \leq 2A_{n}\left( f\right)  \label{3.16}
\end{equation}%
and 
\begin{equation}
B_{n-1}\left( f\right) \leq 2B_{n}\left( f\right) .  \label{3.17}
\end{equation}%
If$\ f$ is also convex, we get that 
\begin{equation}
A_{n}\left( f\right) \leq A_{n+1}\left( f\right) \leq 2A_{n}\left( f\right)
\label{3.18}
\end{equation}%
and%
\begin{equation}
B_{n}\left( f\right) \leq B_{n-1}\left( f\right) \leq 2B_{n}\left( f\right)
\label{3.19}
\end{equation}
\end{theorem}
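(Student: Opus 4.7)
The plan is to prove both bounds (\ref{3.16}) and (\ref{3.17}) by a single device: for each interior point $s/n$ of the coarser grid, write it as a convex combination of the two consecutive finer-grid points straddling it, verify condition (\ref{3.14}), and invoke (\ref{3.15}) of Lemma \ref{L3} to bound a weighted pair of $f$-values on the finer grid by $2f(s/n)$. Summing over $s$ and reindexing should collapse every $f$-value on the finer grid to a common coefficient, producing the averages on both sides.

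For (\ref{3.16}) the relevant decomposition is $\tfrac{s}{n}=\tfrac{n-s}{n}\cdot\tfrac{s}{n+1}+\tfrac{s}{n}\cdot\tfrac{s+1}{n+1}$, valid for $s=1,\dots,n-1$. Condition (\ref{3.14}) reduces to $(s+1)/(n+1)\le 2s/n$, i.e.\ $s(n+2)\ge n$, which is automatic for $s\ge 1$. Lemma \ref{L3} then yields
\[
\tfrac{n-s}{n}\,f\!\left(\tfrac{s}{n+1}\right)+\tfrac{s}{n}\,f\!\left(\tfrac{s+1}{n+1}\right)\le 2f\!\left(\tfrac{s}{n}\right).
\]
Summing over $s$, each interior $r\in\{2,\dots,n-1\}$ inherits weights $(n-r)/n$ (from $s=r$) and $(r-1)/n$ (from $s=r-1$), while $r=1$ and $r=n$ each receive weight $(n-1)/n$ from the single term they appear in; all coefficients equal $(n-1)/n$. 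The summed inequality therefore reads $(n-1)A_{n+1}(f)\le 2(n-1)A_n(f)$, i.e.\ (\ref{3.16}).

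For (\ref{3.17}) I plan to use the mirror decomposition $\tfrac{s}{n}=\tfrac{s}{n}\cdot\tfrac{s-1}{n-1}+\tfrac{n-s}{n}\cdot\tfrac{s}{n-1}$ for $s=1,\dots,n-1$; condition (\ref{3.14}) becomes simply $n\ge 2$. The endpoints $f(0)$ and $f(1)$ that occur in $B_{n-1}(f)$ are reached only once by the pairing (at $s=1$ and $s=n-1$), so I append the trivial inequalities $f(0)\le 2f(0)$ and $f(1)\le 2f(1)$, valid because an increasing subquadratic function on $[0,1]$ is non-negative. After reindexing, every $f(r/(n-1))$ then carries the common coefficient $(n+1)/n$, and the summed inequality is $(n+1)B_{n-1}(f)\le 2(n+1)B_n(f)$, which is (\ref{3.17}). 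The endpoint bookkeeping for the $B$-case is the only real technical obstacle; once handled, the rest is routine coefficient counting.

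Finally, the two-sided refinements (\ref{3.18}) and (\ref{3.19}) follow by combining (\ref{3.16})--(\ref{3.17}) with the classical result of \cite{BJ} recalled in the introduction: when $f$ is additionally convex, $A_n(f)$ is nondecreasing and $B_n(f)$ is nonincreasing in $n$, giving $A_n(f)\le A_{n+1}(f)$ and $B_n(f)\le B_{n-1}(f)$ for free.
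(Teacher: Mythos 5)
Your proposal is correct and follows essentially the same route as the paper: the decomposition $\tfrac{s}{n}=\tfrac{n-s}{n}\cdot\tfrac{s}{n+1}+\tfrac{s}{n}\cdot\tfrac{s+1}{n+1}$, the verification of (\ref{3.14}), the application of Lemma \ref{L3}, and the appeal to \cite{BJ} for (\ref{3.18})--(\ref{3.19}) are exactly the paper's argument. Your treatment of (\ref{3.17}), including the endpoint bookkeeping via $f(0),f(1)\geq 0$, correctly supplies the details the paper leaves to ``the same reasoning.''
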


\begin{proof}
We use the same technique as in \cite{AJS2} and \cite{BJ}. By some
 manipulations we get that 
\begin{equation*}
A_{n+1}\left( f\right) =\tfrac{1}{n}\tsum_{r=1}^{n}f\left( \tfrac{r}{n+1}%
\right) =\tfrac{1}{n-1}\tsum_{r=1}^{n-1}\left[ \tfrac{r}{n}f\left( \tfrac{r+1%
}{n+1}\right) +\tfrac{n-r}{n}f\left( \tfrac{r}{n+1}\right) \right] .
\end{equation*}%
Let us denote \ \ $x_{1}\left( r\right) =\tfrac{r+1}{n+1},$\ \ $x_{2}\left(
r\right) =\tfrac{r}{n+1},$\ \ $\lambda _{1}=\tfrac{r}{n},$\ \ $\lambda _{2}=%
\tfrac{n-r\ \ }{n},$ \ for \ $1\leq r\leq n-1$. Now we get that $\lambda
_{1}x_{1}+\lambda _{2}x_{2}=\tfrac{r}{n}.$

As $x_{1}=\tfrac{r+1}{n+1}<2\left( \tfrac{r}{n}\right) =2\left( \lambda
_{1}x_{1}+\lambda _{2}x_{2}\right) \ $\ and $x_{2}=\tfrac{r}{n+1}<$\ $%
2\left( \tfrac{r}{n}\right) =2\left( \lambda _{1}x_{1}+\lambda
_{2}x_{2}\right) ,$ \ we get that (\ref{3.14}) is satisfied, and as $\ f$ \
is subquadratic increasing we get by Lemma \ref{L3} that 
\begin{equation*}
\tfrac{1}{n-1}\left[ \tfrac{n-1}{n}\tsum_{r=1}^{n}f\left( \tfrac{r}{n+1}%
\right) \right] \leq 2\tfrac{1}{n-1}\left[ \tsum_{r=1}^{n-1}f\left( \tfrac{r%
}{n}\right) \right] ,
\end{equation*}%
and this is inequality (\ref{3.16}). The same reasoning leads to (\ref{3.17}%
). In \cite{BJ} it was proved that $A_{n}\left( f\right) $ increases with $n$
and $B_{n}(f)$ decreases when $f$ is convex. Therefore, if $f$ is convex
increasing and subquadratic we get that (\ref{3.18}) and (\ref{3.19}) hold.
\end{proof}

\medskip

\begin{remark}
\label{R4}The same reasoning that lead to (\ref{3.16}) \ also shows that $%
A_{n}(f)\leq 2f\left( \tfrac{1}{2}\right) $\ for subquadratic increasing
function, and if $f$ is also convex, \ $A_{n}(f)\geq f\left( \tfrac{1}{2}%
\right) $. Therefore, if $f$ is convex increasing subquadratic function then
$f\left( \tfrac{1}{2}\right) \leq \ A_{n}(f)\leq \ A_{n+1}(f)\leq \
2A_{n}(f)\leq 4f\left( \tfrac{1}{2}\right)$.
\end{remark}

\begin{remark}
\label{R5}The following functions are examples of subquadratic increasing
functions which are also convex (see \cite{AJS1}) and therefore 
satisfy Theorem \ref{T10}: 
\begin{eqnarray*}
f(x) &=&x^{p},\qquad 1\leq p\leq 2, \\
f(x) &=&(1+x^{p})^{\tfrac{1}{p}},\qquad 1\leq p, \\
f(x) &=&(1+x^{p})^{\tfrac{1}{p}}-1,\qquad 1\leq p\leq 2, \\
f(x) &=&3x^{2}-2x^{2}\log (x),\qquad 0\leq x\leq 1.
\end{eqnarray*}
\end{remark}

\section{Superquadraticity and lower bounds of averages}

The following theorem refines the results of \cite{QG} for convex functions
which are also superquadratic (like $f\left( x\right) =x^{m}$, \ $m\geq 2).$

\begin{theorem}
\label{T1}Let $f$ be a positive superquadratic function on $\left[ 0,1\right]
$ and let $(a_{i})_{i\in \mathbb{N}}$, $a_i>0$, and 
$\left(i\left( 1-\tfrac{a_{i}}{a_{i+1}}\right)\right)_{i\in \mathbb{N}}$ 
be increasing sequences. Then for $n\geq 2$ we get that%
\begin{align}
\Delta & :=\tfrac{1}{n}\tsum_{i=1}^{n}f\left( \tfrac{a_{i}}{a_{n}}\right) -%
\tfrac{1}{n+1}\tsum_{i=1}^{n+1}f\left( \tfrac{a_{i}}{a_{n+1}}\right)  \notag
\\
& \geq \tfrac{n-1}{n+1}\left[ f\left( \tfrac{a_{2}-a_{1}}{na_{n}}\right)
+f\left( \tfrac{\left( n-2\right) \left( a_{2}-a_{1}\right) }{2\left(
n-1\right) na_{n}}\right) \text{+\ }f\left( \tfrac{\left( n-2\right) \left(
a_{2}-a_{1}\right) }{3n^{2}a_{n}}\right) \right] .  \label{2.1}
\end{align}
\end{theorem}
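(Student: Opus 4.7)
My plan follows the template of Remark~\ref{R3} and the proofs of Theorems~\ref{T4}--\ref{T6}: refine the convex-case estimate of \cite{QG} using the superquadratic refinement in Lemma~\ref{L1}, and organise the correction pieces into exactly three $f$-values via Lemma~\ref{L2} applied with $t=2$. By Lemma~A(iii), $f$ is convex, increasing, and satisfies $f(0)=0$, so monotonicity of $f$ is freely available as a tool to weaken intermediate bounds to the clean $(a_2-a_1)/(na_n)$-type form stated in the theorem.

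For each relevant index $i$, I would invoke Lemma~\ref{L1} in the Remark~\ref{R3} parametrisation with $x=a_1/a_n$, $y=1$, $\lambda_i=(a_n-a_i)/(a_n-a_1)$, producing
\[
 f(a_i/a_n) \leq \lambda_i f(a_1/a_n) + (1-\lambda_i)f(1) - \lambda_i f\!\left(\tfrac{a_i-a_1}{a_n}\right) - (1-\lambda_i) f\!\left(\tfrac{a_n-a_i}{a_n}\right),
\]
with an analogous estimate with $n$ replaced by $n+1$. Summing and reorganising, the linear $f(a_1/\cdot)$ and $f(1)$ contributions should combine to yield a nonnegative main term (the convex-case estimate), while the residual correction pieces drive the superquadratic refinement.

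The correction sum falls exactly under Lemma~\ref{L2} with constant $A=(a_n-a_1)/a_n$ and $\lambda_i=(a_n-a_i)/(a_n-a_1)$. Applying Lemma~\ref{L2} with $t=2$ yields three $f$-terms whose arguments are weighted averages
\[
 \tfrac{2\sum_i (a_n-a_i)(a_i-a_1)\,|2a_i-a_1-a_n|^{k}}{m\,a_n\,(a_n-a_1)^{k+1}}, \qquad k=0,1,2.
\]
To reduce these to the explicit expressions in~(\ref{2.1}) and to extract the $(n-1)/(n+1)$ prefactor, I would use the monotonicity hypothesis on $\left(i(1-a_i/a_{i+1})\right)$: its specialisation at $i=1$ yields $(a_2-a_1)/a_2 \leq j(a_{j+1}-a_j)/a_{j+1}$ for all $j\geq 1$, so each successive difference $a_{j+1}-a_j$ admits an explicit lower bound proportional to $(a_2-a_1)$. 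Combined with monotonicity of $f$, this lower-bounds each of the three arguments above by the clean form stated in~(\ref{2.1}).

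The main obstacle is precisely this last simplification: the careful bookkeeping to match the three specific constants $1/n$, $(n-2)/(2(n-1)n)$, $(n-2)/(3n^2)$ and to extract the prefactor $(n-1)/(n+1)$ from the combination of $m$ and the factor $1/(n(n+1))$ arising from the averages in $\Delta$. The arithmetic is routine but intricate; all of the structural steps (the Lemma~\ref{L1} convex combination, Lemma~\ref{L2} applied with $t=2$, and the monotonicity hypothesis on $\left(i(1-a_i/a_{i+1})\right)$) enter in an essential way.
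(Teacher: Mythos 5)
Your structural plan does not work: the endpoint decomposition of Remark~\ref{R3} (with $x=a_1/a_n$, $y=1$, $\lambda_i=(a_n-a_i)/(a_n-a_1)$) produces an \emph{upper} bound on each $f(a_i/a_n)$, which is the wrong direction for the first sum in $\Delta$. To lower-bound $\Delta$ you must lower-bound $\tfrac1n\sum f(a_i/a_n)$ and upper-bound $\tfrac1{n+1}\sum f(a_i/a_{n+1})$; applying your estimate to both sums leaves you comparing $\tfrac1n\sum f(a_i/a_n)$ against a positive combination of $f(a_1/a_{n+1})$ and $f(1)$, and since $f(1)$ is the largest value of the convex function $f$, the resulting ``main term'' is not nonnegative — there is no convex-case estimate of this shape in \cite{QG}. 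The paper's proof instead uses the consecutive-term rearrangement from \cite{QG}:
\begin{equation*}
\Delta=\tfrac{1}{n+1}\tsum_{i=1}^{n}\left[ \tfrac{i-1}{n}f\left( \tfrac{a_{i-1}}{a_{n}}\right) +\tfrac{n-i+1}{n}f\left( \tfrac{a_{i}}{a_{n}}\right) -f\left( \tfrac{a_{i}}{a_{n+1}}\right) \right],
\end{equation*}
applies Lemma B to each two-point combination of \emph{consecutive} terms, disposes of $f(\text{barycentre})-f(a_i/a_{n+1})\geq 0$ by monotonicity and the inequality $\tfrac{(i-1)a_{i-1}+(n-i+1)a_i}{na_n}\geq\tfrac{a_i}{a_{n+1}}$, and only then feeds the surviving correction terms into Lemma~\ref{L2} with $\lambda_i=i/n$ and $A_i=(a_{i+1}-a_i)/a_n$.

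Even granting your reduction to Lemma~\ref{L2}, the details fail. With your weights, $\lambda_nA_n=0$, so the hypothesis $\lambda_iA_i\geq A>0$ needed for (\ref{1.7}) is violated at $i=n$. The monotonicity hypothesis on $\left(i(1-\tfrac{a_i}{a_{i+1}})\right)$ is tailored to give $\tfrac{i}{n}\cdot\tfrac{a_{i+1}-a_i}{a_n}\geq\tfrac{a_2-a_1}{na_n}$ for the consecutive increments; your correction arguments involve $a_i-a_1$ and $a_n-a_i$, to which this hypothesis does not apply. Finally, the three constants $1/n$, $(n-2)/(2(n-1)n)$, $(n-2)/(3n^2)$ come from the exact evaluations $\sum_{i=1}^{n-1}2(n-i)=n(n-1)$, $\sum_{i=1}^{n-1}2(n-i)\left\vert n-2i\right\vert$, and $\sum_{i=1}^{n-1}2(n-i)(n-2i)^2=\tfrac{n^2(n-1)(n-2)}{3}$, which are specific to the weights $i/n$; your $a_i$-dependent weights cannot produce them.
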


\begin{proof}
From the conditions on $a_{i},$ $i=1,...,n+1$, it is obvious that%
\begin{equation}
\tfrac{i}{n}\left( \tfrac{a_{i+1}}{a_{n}}-\tfrac{a_{i}}{a_{n}}\right) \geq 
\tfrac{a_{2}-a_{1}}{na_{n}}=:A.  \label{2.2}
\end{equation}%
By the same considerations as in \cite{QG} we get that%
\begin{equation}
\tfrac{\left( i-1\right) a_{i-1}+\left( n-i+1\right) a_{i}}{na_{n}}\geq 
\tfrac{a_{i}}{a_{n+1}}  \label{2.3}
\end{equation}%
and%
\begin{eqnarray}
\Delta &=&\tfrac{1}{n}\tsum_{i=1}^{n}f\left( \tfrac{a_{i}}{a_{n}}\right) -%
\tfrac{1}{n+1}\tsum_{i=1}^{n+1}f\left( \tfrac{a_{i}}{a_{n+1}}\right)  \notag
\\
&=&\tfrac{1}{n+1}\tsum_{i=1}^{n}\left[ \tfrac{\left( i-1\right) }{n}f\left( 
\tfrac{a_{i-1}}{a_{n}}\right) +\tfrac{\left( n-i+1\right) }{n}f\left( \tfrac{%
a_{i}}{a_{n}}\right) -f\left( \tfrac{a_{i}}{a_{n+1}}\right) \right] .
\label{2.4}
\end{eqnarray}%
From the superquadraticity of $f$ we get from (\ref{1.2}) and (\ref{2.4})
that%
\begin{eqnarray}
\Delta &=&\tfrac{1}{n+1}\tsum_{i=1}^{n}\left[ \tfrac{\left( i-1\right) }{n}%
f\left( \tfrac{a_{i-1}}{a_{n}}\right) +\tfrac{\left( n-i+1\right) }{n}%
f\left( \tfrac{a_{i}}{a_{n}}\right) -f\left( \tfrac{a_{i}}{a_{n+1}}\right) %
\right]  \notag \\
&\geq &\tfrac{1}{n+1}\tsum_{i=1}^{n}\left[ \tfrac{\left( i-1\right) }{n}%
f\left( \tfrac{\left( n-i+1\right) \left( a_{i}-a_{i-1}\right) }{na_{n}}%
\right) +\tfrac{\left( n-i+1\right) }{n}f\left( \tfrac{\left( i-1\right)
\left( a_{i}-a_{i-1}\right) }{na_{n}}\right) \right]  \notag \\
&&+\tfrac{1}{n+1}\tsum_{i=1}^{n}\left[ f\left( \tfrac{\left( i-1\right)
a_{i-1}+\left( n-i+1\right) a_{i}}{na_{n}}\right) -f\left( \tfrac{a_{i}}{%
a_{n+1}}\right) \right]  \label{2.5}
\end{eqnarray}%
where \ we let $a_{0}=0.$

By Lemma A as $f(x)$ is positive superquadratic it is also increasing.
Therefore from (\ref{2.3}) we get that%
\begin{equation}
\tsum_{i=1}^{n}\left[ f\left( \tfrac{\left( i-1\right) a_{i-1}+\left(
n-i+1\right) a_{i}}{na_{n}}\right) -f\left( \tfrac{a_{i}}{a_{n+1}}\right) %
\right] \geq 0,  \label{2.6}
\end{equation}%
and from (\ref{2.5}) and (\ref{2.6}) we get%
\begin{equation}
\Delta \geq \tfrac{1}{n+1}\tsum_{i=1}^{n}\left[ \tfrac{\left( i-1\right) }{n}%
f\left( \tfrac{\left( n-i+1\right) \left( a_{i}-a_{i-1}\right) }{na_{n}}%
\right) +\tfrac{\left( n-i+1\right) }{n}f\left( \tfrac{\left( i-1\right)
\left( a_{i}-a_{i-1}\right) }{na_{n}}\right) \right]  \label{2.7}
\end{equation}%
As $f(0)=0$ it follows%
\begin{eqnarray}
&&\tsum_{i=1}^{n}\left[ \tfrac{\left( i-1\right) }{n}f\left( \tfrac{\left(
n-i+1\right) \left( a_{i}-a_{i-1}\right) }{na_{n}}\right) +\tfrac{\left(
n-i+1\right) }{n}f\left( \tfrac{\left( i-1\right) \left(
a_{i}-a_{i-1}\right) }{na_{n}}\right) \right]  \notag \\
&=&\tsum_{i=1}^{n-1}\left[ \tfrac{i}{n}f\left( \left( 1-\tfrac{i}{n}\right)
\left( \tfrac{a_{i+1}-a_{i}}{a_{n}}\right) \right) +\left( 1-\tfrac{i}{n}%
\right) f\left( \tfrac{i}{n}\tfrac{\left( a_{i+1}-a_{i}\right) }{a_{n}}%
\right) \right]  \label{2.8}
\end{eqnarray}

From (\ref{2.7}),(\ref{2.8}) and Lemma \ref{L2} we have%
\begin{eqnarray}
\Delta &\geq &\tfrac{1}{n+1}\tsum_{i=1}^{n-1}\left[ \tfrac{i}{n}f\left(
\left( 1-\tfrac{i}{n}\right) \left( \tfrac{a_{i+1}-a_{i}}{a_{n}}\right)
\right) +\left( 1-\tfrac{i}{n}\right) f\left( \tfrac{i}{n}\tfrac{\left(
a_{i+1}-a_{i}\right) }{a_{n}}\right) \right]  \notag \\
&\geq &\tfrac{n-1}{n+1}\tsum_{k=0}^{2}f\left( \tfrac{\tsum_{i=1}^{n-1}2\tfrac{i%
}{n}\left( 1-\tfrac{i}{n}\right) \left\vert 1-\tfrac{2i}{n}\right\vert
^{k}\left( \tfrac{a_{i+1}-a_{i}}{a_{n}}\right) }{n-1}\right)  \label{2.9}
\end{eqnarray}%
holds by inserting in (\ref{1.6}) $\lambda _{i}=\tfrac{i}{n},$ $A_{i}=\tfrac{%
a_{i+1}-a_{i}}{a_{n}},$ $t=2.$ We also get from (\ref{1.7}) that%
\begin{equation}
\Delta \geq \tfrac{n-1}{n+1}\tsum_{k=0}^{2}f\left( \tfrac{%
\tsum_{i=1}^{n-1}2\left( n-i\right) \left\vert n-2i\right\vert ^{k}\left(
a_{2}-a_{1}\right) }{n^{2+k}\left( n-1\right) a_{n}}\right) .  \label{2.10}
\end{equation}%
It is easy to verify that $\tsum_{i=1}^{n-1}2\left( n-i\right) =n\left( n-1\right)$ and
\begin{equation*}
\tsum_{i=1}^{n-1}2\left( n-i\right) \left\vert n-2i\right\vert =\left\{ 
\begin{array}{cc}
\tfrac{\left( n-2\right) n^{2}}{2}, & \left[ \tfrac{n}{2}\right] =\tfrac{n}{2%
} \\ 
\tfrac{n\left( n-1\right) ^{2}}{2}, & \left[ \tfrac{n}{2}\right] =\tfrac{n-1%
}{2},%
\end{array}%
\right.
\end{equation*}%
as well as $\tsum_{i=1}^{n-1}2\left( n-i\right) \left( n-2i\right) ^{2}=\tfrac{%
n^{2}\left( n-1\right) \left( n-2\right) }{3}$ and 
$\tfrac{\left( n-2\right) n^{2}}{2}<\tfrac{n\left( n-1\right) ^{2}}{2}, n=2,3,...$.

Hence, 
\begin{eqnarray*}
\Delta &\geq &\tfrac{1}{n}\tsum_{i=1}^{n}f\left( \tfrac{a_{i}}{a_{n}}\right)
-\tfrac{1}{n+1}\tsum_{i=1}^{n+1}f\left( \tfrac{a_{i}}{a_{n+1}}\right) \\
&\geq &\tfrac{n-1}{n+1}\left[ f\left( \tfrac{a_{2}-a_{1}}{na_{n}}\right)
+f\left( \tfrac{\left( n-2\right) \left( a_{2}-a_{1}\right) }{2\left(
n-1\right) na_{n}}\right) +f\left( \tfrac{\left( n-2\right) \left(
a_{2}-a_{1}\right) }{3n^{2}a_{n}}\right) \right] .
\end{eqnarray*}%
This completes the proof of the theorem.
\end{proof}

The following theorem deals with a lower bound for 
\begin{equation*}
\tfrac{1}{c_{n}}\tsum_{i=1}^{n}f\left( \tfrac{a_{i}}{a_{n}}\right) -\tfrac{1%
}{c_{n+1}}\tsum_{i=1}^{n+1}f\left( \tfrac{a_{i}}{a_{n+1}}\right)
\end{equation*}%
under the same conditions as in \cite[Theorem 5.6]{AJS2}, where a
different lower bound is obtained for positive superquadratic function $f$. 


\begin{theorem}
\label{T2}Let $\left( a_{i}\right) _{i\geq 0}$ and $\left( c_{i}\right)
_{i\geq 0}$ be sequences such that \ $a_{i}>0$, $c_{i}>0$, for $i\geq
1$, \ and\newline
(I) $\ (c_{i})_{i\in \mathbb{N}}$ \ is increasing and $c_{0}=0$,\newline
(II) $\ \left( c_{i}-c_{i-1}\right)_{i\in \mathbb{N}}$ \ is increasing,\newline
(III) $\ c_{1}\left( 1-\tfrac{a_{1}}{a_{2}}\right) \leq c_{i-1}\left( 1-\tfrac{%
a_{i-1}}{a_{i}}\right) \leq c_{n}\left( 1-\tfrac{a_{n}}{a_{n+1}}\right)$, $i=1,...,n$, $%
n\geq 1$,\newline
(IV) $\ a_{0}=0$ \ and\ \ $\left( a_{i}\right)_{i\in \mathbb{N}} $ is increasing.\newline
\ If f is superquadratic and non-negative function on $\left[ 0,1\right] ,$ then%
\begin{equation*}
D:=\tfrac{1}{c_{n}}\tsum_{r=1}^{n}f\left( \tfrac{a_{r}}{a_{n}}
 \right) - \tfrac{1}{c_{n+1}}\tsum_{r=1}^{n+1}f\left( \tfrac{a_{r}}{a_{n+1}}
 \right)  \geq \tfrac{n-1}{c_{n+1}}%
f\left( \tfrac{2c_{1}\left( a_{2}-a_{1}\right) \left( c_{n}-c_{n-1}\right) }{%
c_{n}^{2}a_{n}}\right), \quad n\geq 1 .
\end{equation*}
\end{theorem}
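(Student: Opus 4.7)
My plan is to adapt the proof of Theorem \ref{T1} by replacing the uniform weights $i/n$ with the non-uniform weights $c_{i-1}/c_n$ drawn from the sequence $(c_i)$. The role played there by the monotonicity of $(i(1-a_i/a_{i+1}))$ is now divided between hypotheses (II) and (III).

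First, for each $i\in\{1,\dots,n\}$ I would apply Lemma B in the two-term form (\ref{1.2}) with $\lambda=c_{i-1}/c_n$ at the points $a_{i-1}/a_n$ and $a_i/a_n$, obtaining
\[
\tfrac{c_{i-1}}{c_n}f\!\left(\tfrac{a_{i-1}}{a_n}\right)+\tfrac{c_n-c_{i-1}}{c_n}f\!\left(\tfrac{a_i}{a_n}\right)\ge f(M_i)+R_i,
\]
where $M_i=(c_{i-1}a_{i-1}+(c_n-c_{i-1})a_i)/(c_na_n)$ and
\[
R_i=\tfrac{c_{i-1}}{c_n}f\!\left(\tfrac{(c_n-c_{i-1})(a_i-a_{i-1})}{c_na_n}\right)+\tfrac{c_n-c_{i-1}}{c_n}f\!\left(\tfrac{c_{i-1}(a_i-a_{i-1})}{c_na_n}\right).
\]
A short computation shows that $M_i\ge a_i/a_{n+1}$ is equivalent to $c_n(1-a_n/a_{n+1})\ge c_{i-1}(1-a_{i-1}/a_i)$, which is the right half of (III) (trivial for $i=1$ because $c_0=a_0=0$). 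Since Lemma A guarantees that a non-negative superquadratic $f$ is convex with $f(0)=f'(0)=0$, hence increasing, this gives $f(M_i)\ge f(a_i/a_{n+1})$, and therefore
\[
T:=\sum_{i=1}^{n}\left[\tfrac{c_{i-1}}{c_n}f\!\left(\tfrac{a_{i-1}}{a_n}\right)+\tfrac{c_n-c_{i-1}}{c_n}f\!\left(\tfrac{a_i}{a_n}\right)-f\!\left(\tfrac{a_i}{a_{n+1}}\right)\right]\ge\sum_{i=1}^{n}R_i.
\]

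The step I expect to be the main obstacle is dominating $T$ by $c_{n+1}D$, since the non-standard normalization $1/c_n$ means there is no clean identity as in Theorem \ref{T1}. Collecting the coefficient of each $f(a_j/a_n)$ inside $T$ and comparing with
\[
c_{n+1}D=\tfrac{c_{n+1}}{c_n}\sum_{j=1}^{n}f(a_j/a_n)-\sum_{j=1}^{n}f(a_j/a_{n+1})-f(1)
\]
produces, after simplification, the key identity
\[
c_{n+1}D-T=\tfrac{1}{c_n}\sum_{j=1}^{n}\bigl[(c_{n+1}-c_n)-(c_j-c_{j-1})\bigr]f(a_j/a_n).
\]
Because $(c_i-c_{i-1})$ is increasing by (II), each bracket is non-negative, and the non-negativity of $f$ gives $c_{n+1}D\ge T\ge\sum_{i=1}^{n}R_i$.

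To finish, note that $R_1=0$ since $c_0=0$ and $f(0)=0$. Re-indexing $j=i-1$ with $\lambda_j=c_j/c_n$ and $A_j=(a_{j+1}-a_j)/a_n$ rewrites the slack as $\sum_{j=1}^{n-1}[\lambda_j f((1-\lambda_j)A_j)+(1-\lambda_j)f(\lambda_j A_j)]$. The left half of (III) yields $\lambda_jA_j=c_j(1-a_j/a_{j+1})a_{j+1}/(c_na_n)\ge c_1(a_2-a_1)/(c_na_n)=:A$, so Lemma \ref{L2} (inequality (\ref{1.7})) with $t=0$ gives
\[
\sum_{i=1}^{n}R_i\ge(n-1)f\!\left(\tfrac{2A\sum_{j=1}^{n-1}(1-c_j/c_n)}{n-1}\right).
\]
Since $c_j\le c_{n-1}$ for $j\le n-1$, one has $\sum_{j=1}^{n-1}(1-c_j/c_n)\ge(n-1)(c_n-c_{n-1})/c_n$, and monotonicity of $f$ then lets us replace the argument by $2c_1(a_2-a_1)(c_n-c_{n-1})/(c_n^2a_n)$. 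Dividing the resulting inequality $c_{n+1}D\ge(n-1)f(\,\cdot\,)$ by $c_{n+1}$ delivers the bound claimed in Theorem \ref{T2}.
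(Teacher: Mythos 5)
Your proof is correct and follows essentially the same route as the paper's: Lemma B applied with weights $c_{i-1}/c_n$, the right half of (III) to discard the mean terms via monotonicity, Lemma \ref{L2} with $t=0$ after re-indexing, and the left half of (III) together with $c_j\leq c_{n-1}$ for the final bound. The only cosmetic difference is that you package the normalization step as the explicit identity for $c_{n+1}D-T$ with brackets $(c_{n+1}-c_n)-(c_j-c_{j-1})$, whereas the paper reaches the same point by an index shift followed by decreasing the weights $(c_{n+1}-c_i)/c_n$ to $(c_n-c_{i-1})/c_n$; both rest on hypothesis (II) and $f\geq 0$ in exactly the same way.
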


\begin{proof}
From the conditions (I)-(IV) it is clear that 
\begin{equation}
\tfrac{c_{i}}{c_{n}}\left( \tfrac{a_{i+1}}{a_{n}}-\tfrac{a_{i}}{a_{n}}%
\right) \geq \tfrac{c_{1}\left( a_{2}-a_{1}\right) }{a_{n}c_{n}}=:A
\label{2.11}
\end{equation}%
and that%
\begin{equation}
\tfrac{c_{i-1}a_{i-1}+a_{i}\left( c_{n}-c_{i-1}\right) }{a_{n}c_{n}}\geq 
\tfrac{a_{i}}{a_{n+1}}, \quad \text{for an arbitrary} \quad i=1,...,n.  \label{2.12}
\end{equation}%
From Lemma A we know that $f(0)\leq 0$, therefore if $f$ is positive then $f(0)=0$
and from (I) and (II) we get 
\begin{eqnarray}
D &=&\tfrac{1}{c_{n+1}}\left\{ \tsum_{i=1}^{n}\left[ \tfrac{c_{i}}{c_{n}}%
f\left( \tfrac{a_{i}}{a_{n}}\right) +\tfrac{\left( c_{n+1}-c_{i}\right) }{%
c_{n}}f\left( \tfrac{a_{i}}{a_{n}}\right) \right] -\tsum_{i=1}^{n+1}f\left( 
\tfrac{a_{i}}{a_{n+1}}\right) \right\}  \notag \\
&=&\tfrac{1}{c_{n+1}}\left\{ \tsum_{i=1}^{n}\left[ \tfrac{c_{i-1}}{c_{n}}%
f\left( \tfrac{a_{i-1}}{a_{n}}\right) +\tfrac{\left( c_{n+1}-c_{i}\right) }{%
c_{n}}f\left( \tfrac{a_{i}}{a_{n}}\right) \right] -\tsum_{i=1}^{n}f\left( 
\tfrac{a_{i}}{a_{n+1}}\right) \right\}  \notag \\
&\geq &\tfrac{1}{c_{n+1}}\left\{ \tsum_{i=1}^{n}\left[ \tfrac{c_{i-1}}{c_{n}}%
f\left( \tfrac{a_{i-1}}{a_{n}}\right) +\tfrac{\left( c_{n}-c_{i-1}\right) }{%
c_{n}}f\left( \tfrac{a_{i}}{a_{n}}\right) \right] -\tsum_{i=1}^{n}f\left( 
\tfrac{a_{i}}{a_{n+1}}\right) \right\}  \label{2.13}
\end{eqnarray}%
Using the superquadraticity of $f$ from (\ref{1.2}) and (\ref{2.13}) it follows%
\begin{eqnarray}
D &\geq &\tfrac{1}{c_{n+1}}\tsum_{i=1}^{n}\left[ \tfrac{c_{i-1}}{c_{n}}%
f\left( \tfrac{\left( c_{n}-c_{i-1}\right) \left( a_{i}-a_{i-1}\right) }{%
a_{n}c_{n}}\right) +\tfrac{\left( c_{n}-c_{i-1}\right) }{c_{n}}f\left( 
\tfrac{c_{i-1}\left( a_{i-}a_{i-1}\right) }{c_{n}a_{n}}\right) \right] 
\notag \\
&&+\tfrac{1}{c_{n+1}}\left[ \tsum_{i=1}^{n}f\left( \tfrac{%
c_{i-1}a_{i-1}+\left( c_{n}-c_{i-1}\right) a_{i}}{c_{n}a_{n}}\right)
-\tsum_{i=1}^{n}f\left( \tfrac{a_{i}}{a_{n+1}}\right) \right] .  \label{2.14}
\end{eqnarray}%
As $f$ is increasing, according to (\ref{2.12}), (\ref{2.13}) implies
\begin{eqnarray}
D &\geq &\tfrac{1}{c_{n+1}}\tsum_{i=1}^{n}\left[ \tfrac{c_{i-1}}{c_{n}}%
f\left( \tfrac{\left( c_{n}-c_{i-1}\right) \left( a_{i}-a_{i-1}\right) }{%
a_{n}c_{n}}\right) +\tfrac{\left( c_{n}-c_{i-1}\right) }{c_{n}}f\left( 
\tfrac{c_{i-1}\left( a_{i}-a_{i-1}\right) }{a_{n}c_{n}}\right) \right] 
\notag \\
&=&\tfrac{1}{c_{n+1}}\tsum_{i=1}^{n-1}\left[ \tfrac{c_{i}}{c_{n}}f\left( 
\tfrac{\left( c_{n}-c_{i}\right) \left( a_{i+1}-a_{i}\right) }{a_{n}c_{n}}%
\right) +\tfrac{\left( c_{n}-c_{i}\right) }{c_{n}}f\left( \tfrac{c_{i}\left(
a_{i+1-}a_{i}\right) }{c_{n}a_{n}}\right) \right] .  \label{2.15}
\end{eqnarray}%
The last equality follows from $f(0)=0$ and $c_{0}=0.$

Inserting in (\ref{1.6}) that $\lambda _{i}=\tfrac{c_{i}}{c_{n}}$ \ and \ $%
A_{i}=\tfrac{a_{i+1}-a_{i}}{a_{n}}$, from (\ref{2.15}), we get 
\begin{equation}
D\geq \tfrac{n-1}{c_{n+1}}\tsum_{k=0}^{t}f\left( \tfrac{\tsum_{i=1}^{n-1}2%
\left( \tfrac{c_{i}}{c_{n}}\right) \left( 1-\tfrac{c_{i}}{c_{n}}\right)
\left\vert 1-\tfrac{2c_{i}}{c_{n}}\right\vert ^{k}\left( \tfrac{a_{i+1}-a_{i}}{%
a_{n}}\right) }{n-1}\right).  \label{2.16}
\end{equation}%

Using (\ref{2.11}) and (\ref{1.7}) we get from (\ref{2.16}) that%
\begin{equation}
D\geq \tfrac{n-1}{c_{n+1}}\tsum_{k=0}^{t}f\left( \tfrac{\tsum_{i=1}^{n-1}2%
\left( 1-\tfrac{c_{i}}{c_{n}}\right) \left\vert 1-\tfrac{2c_{i}}{c_{n}}%
\right\vert ^{k}c_{1}\left( a_{2}-a_{1}\right) }{\left( n-1\right) a_{n}c_{n}%
}\right) ,\text{ \ \ }t=0,1,2,... . \label{2.17}
\end{equation}%
As $(c_{i})_{i\in \mathbb{N}}$ is increasing, from (\ref{2.17}) we have
\begin{align*}
D& =\tfrac{1}{c_{n}}%
\tsum_{i=1}^{n}f\left( \tfrac{a_{i}}{a_{n}}\right) -\tfrac{1}{c_{n+1}}%
\tsum_{i=1}^{n+1}f\left( \tfrac{a_{i}}{a_{n+1}}\right) \\
& \geq \tfrac{n-1}{c_{n+1}}\tsum_{k=0}^{t}f\left( \tfrac{\tsum_{i=1}^{n-1}2%
\left( c_{n}-c_{n-1}\right) \left\vert c_{n}-2c_{i}\right\vert
^{k}c_{1}\left( a_{2}-a_{1}\right) }{\left( n-1\right) c_{n}^{2+k}a_{n}}%
\right) \\
& \geq \tfrac{n-1}{c_{n+1}}f\left( \tfrac{2c_{1}\left( a_{2}-a_{1}\right)
\left( c_{n}-c_{n-1}\right) }{c_{n}^{2}a_{n}}\right) .
\end{align*}
\end{proof}

\begin{remark}
\label{R2} If $\left(c_{i}\right)_{i\in \mathbb{N}}=(a_{i})_{i\in \mathbb{N}}$, from Theorem \ref{T2}, we get a
refinement of Theorem 2 in \cite{QG} \ for convex functions that are also
superquadratic.
\end{remark}

In the following theorem we extend our investigation to three
sequences. Investigation with three sequences was also dealt in \cite{AJS2}.

\begin{theorem}
\label{T3}Let $f$ be a positive superquadratic function on $\left[ 0,L\right]
.$\ \ Let $\left( a_{i}\right) _{i\geq 0},$ $\left( b_{i}\right) _{i\geq 0},$
$\left( c_{i}\right) _{i\geq 0}$\ \ be sequences such that \ $a_{i}>0,$ $%
b_{i}>0,$ $c_{i}>0$\ for $i \geq 1$\ and\newline
(a) $\ \left(a_{i}\right)_{i\in \mathbb{N}},$ $\left(b_{i}\right)_{i\in \mathbb{N}},$ 
$\left(c_{i}\right)_{i\in \mathbb{N}}$\ are increasing and $a_{0}=c_{0}=0,$\newline
(b) $\left(c_{i}-c_{i-1}\right)_{i\in \mathbb{N}}$\ is increasing,\newline
(c) $\ c_{n}\left( 1-\tfrac{b_{n}}{b_{n+1}}\right) \geq c_{r}\left( 1-\tfrac{%
a_{r}}{a_{r+1}}\right) ,$ for $r\leq n.$\newline
Then%
\begin{eqnarray}
H&:=&\tfrac{1}{c_{n}}%
\tsum_{r=1}^{n}f\left( \tfrac{a_{r}}{b_{n}}\right) -\tfrac{1}{c_{n+1}}%
\tsum_{r=1}^{n+1}f\left( \tfrac{a_{r}}{b_{n+1}}\right)  \notag \\
&\geq & \tfrac{n-1}{c_{n+1}}f\left( \tfrac{2c_{1}\left( c_{n}-c_{n-1}\right) A%
}{c_{n}^{2}b_{n}}\right) ,  \label{2.18}
\end{eqnarray}%
where $A:=\min \{a_{i+1}-a_{i}:i=1,...,n\}$.
\end{theorem}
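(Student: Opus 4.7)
The plan is to mimic closely the proof of Theorem \ref{T2}, replacing $a_n$ in the arguments of $f$ by $b_n$ throughout and tracking where condition (c) enters. First I would establish two preliminary inequalities. From (a) we have $a_{i+1}-a_i \geq A$ and $c_i \geq c_1$ for $1 \leq i \leq n-1$, so
\[
\tfrac{c_i}{c_n}\left(\tfrac{a_{i+1}}{b_n}-\tfrac{a_i}{b_n}\right) \geq \tfrac{c_1 A}{c_n b_n},
\]
which is the analog of (\ref{2.11}). Condition (c) applied at $r=i-1$ (with the case $i=1$ trivial because $c_0=0$), after clearing denominators, is equivalent to
\[
\tfrac{c_{i-1}a_{i-1}+(c_n-c_{i-1})a_i}{c_n b_n} \geq \tfrac{a_i}{b_{n+1}},
\]
which is the analog of (\ref{2.12}).

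Next I would perform the same splitting used in Theorem \ref{T2}. Writing $\tfrac{c_{n+1}}{c_n}f(a_i/b_n) = \tfrac{c_i}{c_n}f(a_i/b_n) + \tfrac{c_{n+1}-c_i}{c_n}f(a_i/b_n)$, reindexing the first piece via $c_0=a_0=0$, and invoking (b) to replace $c_{n+1}-c_i$ by the smaller $c_n-c_{i-1}$ (permissible since $f\geq 0$) yields
\[
H \geq \tfrac{1}{c_{n+1}}\left\{\tsum_{i=1}^n\left[\tfrac{c_{i-1}}{c_n}f\!\left(\tfrac{a_{i-1}}{b_n}\right) + \tfrac{c_n-c_{i-1}}{c_n}f\!\left(\tfrac{a_i}{b_n}\right)\right] - \tsum_{i=1}^n f\!\left(\tfrac{a_i}{b_{n+1}}\right)\right\}.
\]
Applying the superquadratic inequality (\ref{1.2}) to each bracketed pair produces the usual two deviation terms plus a Jensen center $f\!\left(\tfrac{c_{i-1}a_{i-1}+(c_n-c_{i-1})a_i}{c_n b_n}\right)$; the analog of (\ref{2.12}) together with the monotonicity of $f$ (Lemma A) then cancels the $f(a_i/b_{n+1})$ terms. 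After a final index shift (and using $f(0)=0$ with $c_0=0$), this reduces to
\[
H \geq \tfrac{1}{c_{n+1}}\tsum_{i=1}^{n-1}\left[\tfrac{c_i}{c_n}f\!\left(\tfrac{(c_n-c_i)(a_{i+1}-a_i)}{c_n b_n}\right) + \tfrac{c_n-c_i}{c_n}f\!\left(\tfrac{c_i(a_{i+1}-a_i)}{c_n b_n}\right)\right].
\]

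Finally I would apply Lemma \ref{L2}, specifically (\ref{1.7}), with $\lambda_i = c_i/c_n$, $A_i = (a_{i+1}-a_i)/b_n$, and the lower bound $\lambda_i A_i \geq c_1 A/(c_n b_n)$ from the preliminary step; taking $t=0$ gives
\[
H \geq \tfrac{n-1}{c_{n+1}}f\!\left(\tfrac{2c_1 A\tsum_{i=1}^{n-1}(c_n-c_i)}{(n-1)c_n^2 b_n}\right).
\]
Since $c_n-c_i \geq c_n-c_{n-1}$ for $i\leq n-1$, and $f$ is increasing, this yields
\[
H \geq \tfrac{n-1}{c_{n+1}}f\!\left(\tfrac{2c_1(c_n-c_{n-1})A}{c_n^2 b_n}\right),
\]
as claimed.

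The main obstacle I expect is verifying that condition (c) is exactly the hypothesis needed to absorb the Jensen-center terms $f(a_i/b_{n+1})$, i.e.\ that the chain of equivalences above really reduces to (c) with no additional assumption on the interplay of $(a_i)$ and $(b_i)$. Once this is confirmed the rest is routine bookkeeping along the template of Theorem \ref{T2}, now with $b_n$ in the denominator of the arguments and the generic minimum $A$ in place of $a_2-a_1$.
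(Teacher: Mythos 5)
Your proposal is correct and is exactly the argument the paper intends: the paper omits the proof of Theorem \ref{T3} entirely, stating only that it is analogous to Theorems \ref{T1} and \ref{T2}, and your adaptation of the Theorem \ref{T2} template (replacing $a_{n}$ by $b_{n}$ in the arguments of $f$, replacing $a_{2}-a_{1}$ by $A=\min\{a_{i+1}-a_{i}\}$ so that $c_{i}(a_{i+1}-a_{i})\geq c_{1}A$ needs only monotonicity of $(a_i)$ and $(c_i)$, and checking that condition (c) at $r=i-1$ is precisely the analog of (\ref{2.12})) is the right one and goes through. The single step you pass over silently is that reindexing $\sum_{i=1}^{n}\tfrac{c_{i}}{c_{n}}f\left(\tfrac{a_{i}}{b_{n}}\right)$ leaves a boundary discrepancy $f\left(\tfrac{a_{n}}{b_{n}}\right)-f\left(\tfrac{a_{n+1}}{b_{n+1}}\right)$, which in Theorem \ref{T2} cancels exactly as $f(1)-f(1)$ but here must be argued nonnegative; this is exactly the $i=n+1$ case of your analog of (\ref{2.12}) (equivalently, condition (c) at $r=n$ gives $\tfrac{a_{n}}{b_{n}}\geq\tfrac{a_{n+1}}{b_{n+1}}$, and $f$ is increasing), so it is covered by the hypotheses you already invoke and should just be stated explicitly.
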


\begin{proof}
The technique of the proof is analogue to the techniques that we used in proving Theorem \ref{T1}
and Theorem \ref{T2}.
\end{proof}

\section{Subquadraticity and averages}

In this chapter we deal with functions that are increasing and subquadratic
on $\left[ 0,1\right] $, like are $f(x)=x^{m}$, $0\leq m\leq 2$ and 
\begin{equation*}
f\left( x\right) =\left\{ 
\begin{array}{ll}
x^{2}-2x^{2}\log x, & 0<x\leq 1 \\ 
0\qquad \qquad \qquad , & x=0%
\end{array}%
\right. .
\end{equation*}%
The last function is not concave and not convex and therefore none of the
results of \cite{BJ}, \cite{CQCD}, \cite{EP}, and \cite{QG} are applicable
to this function.\medskip


\begin{theorem}
\label{T8}Let f be increasing subquadratic function on $\left[ 0,1\right] .$%
\ \ Let $\left( a_{i}\right) _{i\geq 0}$\ \ satisfy\linebreak (A) 
$\left(a_i\right)_{i\in \mathbb{N}}$ is increasing sequence and $a_{i}>0$, $i=1,...,n+1$,\newline
(B) $\ i\left( \tfrac{a_{i+1}}{a_{i}}-1\right) \leq n\left( \tfrac{a_{n+1}}{%
a_n}-1\right) ,$ \ $i=1,...,n.$\newline
Then%
\begin{eqnarray}
E&:=&\tfrac{1}{n+1}
\tsum_{i=1}^{n+1}f\left( \tfrac{a_{i}}{a_{n+1}}\right) -\tfrac{1}{n}%
\tsum_{i=1}^{n}f\left( \tfrac{a_{i}}{a_{n}}\right)  \notag \\
&\leq &\tsum_{i=1}^{n}\left[ \tfrac{i}{n\left( n+1\right) }f\left( \tfrac{%
n-i+1}{n+1}\cdot \tfrac{a_{i+1}-a_{i}}{a_{n+1}}\right) +\tfrac{n-i+1}{%
n\left( n+1\right) }f\left( \tfrac{i}{n+1}\cdot \tfrac{a_{i+1}-a_{i}}{a_{n+1}%
}\right) \right] .  \label{3.1}
\end{eqnarray}%
Moreover, if in addition\newline
(C) $\ \tfrac{a_{i+1}}{a_{i}}\leq 2,$ \ $i=1,...,n$\newline
holds, then%
\begin{equation}
\tfrac{1}{n+1}\tsum_{i=1}^{n+1}f\left( \tfrac{a_{i}}{a_{n+1}}\right) \leq 
\tfrac{2}{n}\tsum_{i=1}^{n}f\left( \tfrac{a_{i}}{a_{n}}\right).  \label{3.2}
\end{equation}
\end{theorem}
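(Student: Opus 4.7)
The plan is to localize the subquadratic inequality to each pair $\{a_i/a_{n+1},\,a_{i+1}/a_{n+1}\}$ and then telescope, following the spirit of Theorem \ref{T10} but with weights dictated by the right-hand side of (\ref{3.1}). For each $i=1,\dots,n$, I apply the subquadratic (reversed) form of (\ref{1.3}) in Lemma \ref{L1} with $t=0$, at $x=a_i/a_{n+1}$, $y=a_{i+1}/a_{n+1}$ and $\lambda=(n-i+1)/(n+1)$. This yields
\begin{equation*}
\tfrac{n-i+1}{n+1} f\!\left(\tfrac{a_i}{a_{n+1}}\right) + \tfrac{i}{n+1} f\!\left(\tfrac{a_{i+1}}{a_{n+1}}\right) \leq f\!\left(\tfrac{(n-i+1)a_i + i\,a_{i+1}}{(n+1)a_{n+1}}\right) + \tfrac{n-i+1}{n+1} f\!\left(\tfrac{i}{n+1}\cdot\tfrac{a_{i+1}-a_i}{a_{n+1}}\right) + \tfrac{i}{n+1} f\!\left(\tfrac{n-i+1}{n+1}\cdot\tfrac{a_{i+1}-a_i}{a_{n+1}}\right),
\end{equation*}
and the last two terms already match, after division by $n$, the $i$-th summand on the right of (\ref{3.1}).

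Next I would use hypothesis (B) to bound the inner argument. Clearing denominators in $[(n-i+1)a_i + i\,a_{i+1}]/[(n+1)a_{n+1}] \leq a_i/a_n$ gives the equivalent inequality $i(a_{i+1}/a_i - 1) \leq (n+1)(a_{n+1}/a_n - 1)$, and (B) implies the stronger bound with $n$ on the right. Since $f$ is increasing, this replaces $f\!\left(\tfrac{(n-i+1)a_i+ia_{i+1}}{(n+1)a_{n+1}}\right)$ by $f(a_i/a_n)$. I then multiply by $1/n$ and sum for $i=1,\dots,n$. On the left, reindexing $\tsum_{i=1}^n i\,f(a_{i+1}/a_{n+1}) = \tsum_{j=2}^{n+1}(j-1)f(a_j/a_{n+1})$ and combining with $\tsum_{i=1}^n(n-i+1)f(a_i/a_{n+1})$ produces the uniform coefficient $n$ for every $f(a_j/a_{n+1})$, $j=1,\dots,n+1$, so the left-hand side collapses to $\tfrac{1}{n+1}\tsum_{j=1}^{n+1}f(a_j/a_{n+1})$. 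Rearranging gives (\ref{3.1}).

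For (\ref{3.2}) the same two-point configuration now calls on Lemma \ref{L3}: by (C), $\max\{a_i/a_{n+1},\,a_{i+1}/a_{n+1}\} \leq 2\min\{a_i/a_{n+1},\,a_{i+1}/a_{n+1}\}$, so (\ref{3.14}) is satisfied and Lemma \ref{L3} delivers $\tfrac{n-i+1}{n+1}f(a_i/a_{n+1})+\tfrac{i}{n+1}f(a_{i+1}/a_{n+1}) \leq 2 f\!\left(\tfrac{(n-i+1)a_i + i\,a_{i+1}}{(n+1)a_{n+1}}\right) \leq 2 f(a_i/a_n)$, again using (B) and monotonicity. Summing as above yields (\ref{3.2}). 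The main obstacle is pinning down the correct weight $\lambda=(n-i+1)/(n+1)$: it is the unique choice that simultaneously realizes the precise coefficients appearing in (\ref{3.1}), aligns the inner argument of Lemma \ref{L1} with the quantity controlled by (B), and makes the telescoping collapse exactly to $\tfrac{1}{n+1}\tsum_j f(a_j/a_{n+1})$.
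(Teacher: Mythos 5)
Your proposal is correct and follows essentially the same route as the paper: the same pairwise decomposition with weights $\tfrac{n-i+1}{n+1}$, $\tfrac{i}{n+1}$, the reversed superquadraticity inequality applied to each pair $\bigl(\tfrac{a_i}{a_{n+1}},\tfrac{a_{i+1}}{a_{n+1}}\bigr)$, hypothesis (B) plus monotonicity to replace the convex combination by $f\bigl(\tfrac{a_i}{a_n}\bigr)$, and the same telescoping reindexation. The only (harmless) divergence is in part (\ref{3.2}), where you invoke Lemma \ref{L3} directly on each weighted pair, whereas the paper instead bounds the two remainder arguments on the right of (\ref{3.1}) by $\tfrac{ia_{i+1}+(n-i+1)a_i}{(n+1)a_{n+1}}\leq\tfrac{a_i}{a_n}$ and absorbs them; both arguments are valid.
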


\begin{proof}
Since $\left( a_{i}\right) _{i\in \mathbb{N}}$ increases we get from (B)\ that
\begin{equation*}
\left( n+1\right) \left( \tfrac{a_{n+1}}{a_{n}}-1\right) \geq n\left( \tfrac{%
a_{n+1}}{a_{n}}-1\right) \geq i\left( \tfrac{a_{i+1}}{a_{i}}-1\right) ,\text{
\ \ \ \ \ }i=1,...,n,
\end{equation*}
which is equivalent to 
\begin{equation}
\tfrac{ia_{i+1}+\left( n-i+1\right) a_{i}}{\left( n+1\right) a_{n+1}}\leq 
\tfrac{a_{i}}{a_{n}}  \label{3.3}
\end{equation}%
Rewriting E we get%
\begin{eqnarray}
E &=&\tfrac{1}{n}\left[ \tsum_{i=1}^{n+1}\tfrac{n}{n+1}f\left( \tfrac{a_{i}}{%
a_{n+1}}\right) -\tsum_{i=1}^{n}f\left( \tfrac{a_{i}}{a_{n}}\right) \right] 
\notag \\
&=&\tfrac{1}{n}\left[ \tsum_{i=1}^{n+1}\tfrac{i-1}{n+1}f\left( \tfrac{a_{i}}{%
a_{n+1}}\right) +\tsum_{i=1}^{n+1}\tfrac{n-i+1}{n+1}f\left( \tfrac{a_{i}}{%
a_{n+1}}\right) -\tsum_{i=1}^{n}f\left( \tfrac{a_{i}}{a_{n}}\right) \right] 
\notag \\
&=&\tfrac{1}{n}\tsum_{i=1}^{n}\left[ \tfrac{i}{n+1}f\left( \tfrac{a_{i+1}}{%
a_{n+1}}\right) +\tfrac{n-i+1}{n+1}f\left( \tfrac{a_{i}}{a_{n+1}}\right)
-f\left( \tfrac{a_{i}}{a_{n}}\right) \right] .  \label{3.4}
\end{eqnarray}%
As $f$ is subquadratic, by using (\ref{1.6}) we get from (\ref{3.4}) that%
\begin{eqnarray}
E &=&\tfrac{1}{n}\left\{ \tsum_{i=1}^{n}\left[ \tfrac{i}{n+1}f\left( \tfrac{%
a_{i+1}}{a_{n+1}}\right) +\tfrac{n-i+1}{n+1}f\left( \tfrac{a_{i}}{a_{n+1}}%
\right) \right] -\tsum_{i=1}^{n}f\left( \tfrac{a_{i}}{a_{n}}\right) \right\}
\notag \\
&\leq &\tfrac{1}{n}\tsum_{i=1}^{n}\left[ \tfrac{i}{n+1}f\left( \tfrac{n-i+1}{%
n+1}\cdot \tfrac{a_{i+1}-a_{i}}{a_{n+1}}\right) +\tfrac{n-i+1}{n+1}f\left( 
\tfrac{i}{n+1}\cdot \tfrac{a_{i+1}-a_{i}}{a_{n+1}}\right) \right]  \notag \\
&&+\tfrac{1}{n}\tsum_{i=1}^{n}\left[ f\left( \tfrac{ia_{i+1}+\left(
n-i+1\right) a_{i}}{\left( n+1\right) a_{n+1}}\right) -f\left( \tfrac{a_{i}}{%
a_{n}}\right) \right] .  \label{3.5}
\end{eqnarray}%
As $f$ is increasing, using (\ref{3.3}), we get from (\ref{3.5}) that%
\begin{equation*}
E\leq \tsum_{i=1}^{n}\left[ \tfrac{i}{n\left( n+1\right) }f\left( \tfrac{%
n-i+1}{n+1}\cdot \tfrac{a_{i+1}-a_{i}}{a_{n+1}}\right) +\tfrac{n-i+1}{%
n\left( n+1\right) }f\left( \tfrac{i}{n+1}\cdot \tfrac{a_{i+1}-a_{i}}{a_{n+1}%
}\right) \right] .
\end{equation*}%
Hence (\ref{3.1}) is proved. If (C) is also satisfied, then it is easy to
verify that%
\begin{equation}
\tfrac{n-i+1}{n+1}\cdot \tfrac{a_{i+1}-a_{i}}{a_{n+1}}\leq \tfrac{%
ia_{i+1}+\left( n-i+1\right) a_{i}}{\left( n+1\right) a_{n+1}}.  \label{3.6}
\end{equation}%
As it is given that $a_{i}>0,$ $i=1,...,n+1$, it is obvious that also 
\begin{equation}
\tfrac{i\left( a_{i+1}-a_{i}\right) }{\left( n+1\right) a_{n+1}}\leq \tfrac{%
ia_{i+1}+\left( n-i+1\right) a_{i}}{\left( n+1\right) a_{n+1}},\text{ \ \ }%
i=1,...,n+1.  \label{3.7}
\end{equation}%
By (\ref{3.6}) and (\ref{3.7}) \ we get from (\ref{3.1}) and (\ref{3.3})
that $E\leq \tfrac{1}{n}\tsum_{i=1}^{n}f\left( \tfrac{a_{i}}{a_{n}}\right)$
\ which is the same as (\ref{3.2}). Hence the theorem is proved.
\end{proof}

\begin{theorem}
\label{T9}Let $f$ be an increasing subquadratic function on $\left[ 0,1%
\right] .$ \ Let $\left( a_{i}\right) _{i\geq 0}$ satisfy\newline
(i) $\ a_{i}>0,$ \ $i=1,...,n+1$ and $a_{0}=0,$\newline
(ii) $\left(a_{i}\right)_{i\in \mathbb{N}}$ and $\left(a_{i}-a_{i-1}\right)_{i\in \mathbb{N}}$
are increasing sequences.\newline
Then%
\begin{eqnarray}
R&:=&\tfrac{1}{a_{n+1}}%
\tsum_{i=1}^{n+1}f\left( \tfrac{a_{i}}{a_{n+1}}\right) -\tfrac{1}{a_{n}}%
\tsum_{i=1}^{n}f\left( \tfrac{a_{i}}{a_{n}}\right)  \notag \\
&\leq &\tfrac{1}{a_{n}}\tsum_{i=1}^{n}\left[ \tfrac{a_{n+1}-a_{i}}{a_{n+1}}%
f\left( \tfrac{a_{i}}{a_{n}}\cdot \tfrac{a_{i+1}-a_{i}}{a_{n+1}}\right) +%
\tfrac{a_{i}}{a_{n+1}}f\left( \tfrac{a_{n}-a_{i}}{a_{n+1}}\cdot \tfrac{%
a_{i+1}-a_{i}}{a_{n+1}}\right) \right] .  \label{3.8}
\end{eqnarray}%
Moreover, if an addition\newline
(iii) $\ \tfrac{a_{i+1}}{a_{i}}\leq 2,$ \ $i=1,...,n$\newline
then%
\begin{equation}
\tfrac{1}{a_{n+1}}\tsum_{i=1}^{n+1}f\left( \tfrac{a_{i}}{a_{n+1}}\right)
\leq \tfrac{2}{a_{n}}\tsum_{i=1}^{n}f\left( \tfrac{a_{i}}{a_{n}}\right) .
\label{3.9}
\end{equation}
\end{theorem}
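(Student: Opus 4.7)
The plan is to mirror the proof of Theorem \ref{T8} with the uniform weights $i/(n+1)$ and $(n{-}i{+}1)/(n+1)$ replaced by the $a$-weighted analogues $a_i/a_{n+1}$ and $(a_{n+1}-a_i)/a_{n+1}$. First I multiply $R$ by $a_n$, so that $a_nR = \sum_{i=1}^{n+1}(a_n/a_{n+1})f(a_i/a_{n+1}) - \sum_{i=1}^n f(a_i/a_n)$, and then regroup the $(n+1)$-term sum into adjacent pairs indexed by $i=1,\ldots,n$. Using $a_0=0$ and a single reindexation, one checks that in the sum $\sum_{i=1}^n\bigl[(a_i/a_{n+1})f(a_{i+1}/a_{n+1}) + ((a_{n+1}-a_i)/a_{n+1})f(a_i/a_{n+1})\bigr]$ the coefficient of each $f(a_j/a_{n+1})$ equals $1-(a_j-a_{j-1})/a_{n+1}$; by hypothesis (ii) this is at least $a_n/a_{n+1}$, and since $f$ is increasing with $f(0)\geq 0$ it is non-negative, so the $(n+1)$-sum is bounded above by that paired sum.

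Second, I apply Lemma \ref{L1} in its reversed form (valid for subquadratic $f$) inside each bracket with $\lambda=(a_{n+1}-a_i)/a_{n+1}$, $x=a_i/a_{n+1}$, $y=a_{i+1}/a_{n+1}$, so that $|y-x|=(a_{i+1}-a_i)/a_{n+1}$. This yields the upper bound $f(\bar{x}_i) + \tfrac{a_{n+1}-a_i}{a_{n+1}} f\bigl(\tfrac{a_i}{a_{n+1}}\cdot\tfrac{a_{i+1}-a_i}{a_{n+1}}\bigr) + \tfrac{a_i}{a_{n+1}} f\bigl(\tfrac{a_{n+1}-a_i}{a_{n+1}}\cdot\tfrac{a_{i+1}-a_i}{a_{n+1}}\bigr)$, with $\bar{x}_i=a_i(a_{n+1}+a_{i+1}-a_i)/a_{n+1}^{2}$. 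The decisive step is then the monotonicity cancellation: I verify $\bar{x}_i\leq a_i/a_n$, which after clearing denominators reduces to $a_n(a_{i+1}-a_i)\leq a_{n+1}(a_{n+1}-a_n)$, and this follows from hypothesis (ii) since $a_{i+1}-a_i\leq a_{n+1}-a_n$ and $a_n<a_{n+1}$. Because $f$ is increasing, $f(\bar{x}_i)\leq f(a_i/a_n)$, so the subtracted sum $\sum f(a_i/a_n)$ absorbs the mean contributions and only the two distance-type $f$-terms survive. Dividing through by $a_n$ and, where needed, loosening the inner factors by $f$-monotonicity (using $a_i/a_{n+1}\leq a_i/a_n$ and $(a_n-a_i)/a_{n+1}\leq (a_{n+1}-a_i)/a_{n+1}$) gives the displayed bound (3.8).

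For the doubling inequality (3.9), the argument parallels the end of the proof of Theorem \ref{T8}. Hypothesis (iii) is $a_{i+1}\leq 2a_i$, i.e.\ $a_{i+1}-a_i\leq a_i$, so each of the two inner arguments of $f$ in (3.8) is bounded above by $a_i/a_n$ (using the trivial bounds on the outer fractions and $a_{n+1}\geq a_n$). Since $f$ is increasing and the outer weights $(a_{n+1}-a_i)/a_{n+1}$ and $a_i/a_{n+1}$ in the $i$-th bracket sum to $1$, the full $i$-th bracket is at most $f(a_i/a_n)$. Summing and combining with (3.8) gives $\tfrac{1}{a_{n+1}}\sum_{i=1}^{n+1}f(a_i/a_{n+1}) - \tfrac{1}{a_n}\sum_{i=1}^{n}f(a_i/a_n)\leq \tfrac{1}{a_n}\sum_{i=1}^{n}f(a_i/a_n)$, which is exactly (3.9).

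The main obstacles I anticipate are the bookkeeping in the initial regrouping (choosing the pairing so that the hidden factor is precisely $1-(a_j-a_{j-1})/a_{n+1}$) and the monotonicity comparison $\bar{x}_i\leq a_i/a_n$; both are direct $a$-weighted analogues of the corresponding steps in Theorem \ref{T8}, but the non-uniform spacing of the weights $a_i/a_{n+1}$ makes the algebra more delicate than in the uniform case.
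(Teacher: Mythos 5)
The paper itself gives no written proof here (it only says the steps are analogous to Theorem \ref{T8}), so I can only judge your reconstruction on its own terms. Most of it is sound and is indeed the right analogue: the regrouping with coefficient $1-\tfrac{a_j-a_{j-1}}{a_{n+1}}\geq \tfrac{a_n}{a_{n+1}}$ (valid because $\left(a_i-a_{i-1}\right)$ is increasing and $f\geq 0$), the reversed Jensen step with $\lambda=\tfrac{a_{n+1}-a_i}{a_{n+1}}$, and the comparison $\bar{x}_i\leq a_i/a_n$ reducing to $a_n\left(a_{i+1}-a_i\right)\leq a_{n+1}\left(a_{n+1}-a_n\right)$ are all correct. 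The gap is in your final ``loosening.'' What your Jensen step actually yields for the $i$-th bracket is
$\tfrac{a_{n+1}-a_i}{a_{n+1}}f\bigl(\tfrac{a_i}{a_{n+1}}\cdot\tfrac{a_{i+1}-a_i}{a_{n+1}}\bigr)+\tfrac{a_i}{a_{n+1}}f\bigl(\tfrac{a_{n+1}-a_i}{a_{n+1}}\cdot\tfrac{a_{i+1}-a_i}{a_{n+1}}\bigr)$.
Passing from $\tfrac{a_i}{a_{n+1}}$ to $\tfrac{a_i}{a_n}$ in the first inner factor enlarges the bound and is legitimate; but passing from $\tfrac{a_{n+1}-a_i}{a_{n+1}}$ to $\tfrac{a_n-a_i}{a_{n+1}}$ in the second inner factor \emph{shrinks} it, since $f$ is increasing and $a_n-a_i\leq a_{n+1}-a_i$. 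You even quote the inequality $\tfrac{a_n-a_i}{a_{n+1}}\leq\tfrac{a_{n+1}-a_i}{a_{n+1}}$, but it runs the wrong way: from $a_nR\leq\Sigma_{\mathrm{derived}}$ and $\Sigma_{\mathrm{printed}}\leq\Sigma_{\mathrm{derived}}$ you cannot conclude $a_nR\leq\Sigma_{\mathrm{printed}}$. (The discrepancy is not cosmetic: for $i=n$ the printed term is $\tfrac{a_n}{a_{n+1}}f(0)$, which can be $0$, while your derived term is strictly positive in general.)

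What you have actually proved is the bound with $a_{n+1}$ in place of $a_n$ in both inner factors, which is the exact analogue of (\ref{3.1}) under the substitution $\tfrac{i}{n+1}\mapsto\tfrac{a_i}{a_{n+1}}$; the printed (\ref{3.8}) very likely carries $a_n$-for-$a_{n+1}$ typos in those two spots. Either prove the corrected statement, or supply a genuinely different argument for (\ref{3.8}) as printed — your current route does not reach it. Your deduction of (\ref{3.9}) is fine and, reassuringly, goes through unchanged from the corrected bound: under (iii) both inner arguments are at most $\tfrac{a_i}{a_{n+1}}\leq\tfrac{a_i}{a_n}$, the outer weights sum to $1$, and $f$ is increasing, so each bracket is at most $f\left(\tfrac{a_i}{a_n}\right)$.
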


\begin{proof}
The steps of the proof are analogue to the steps we made in proving Theorem \ref{T8}.
\end{proof}


\begin{thebibliography}{99}
\bibitem{ABKB} S. Abramovich, S. Bani\'{c} and M. Klari\v{c}i\'{c} Bakula, \textit{A
variant of Jensen-Steffensen's inequality for convex and superquadratic
functions}, JIPAM 7 (2), Article 70, (2006).

\bibitem{ABM} S. Abramovich, S. Bani\'{c} and M. Mati\'{c}, \textit{%
Superquadratic functions in several variables}, J. Math. Anal. Appl. 327
(2007), 1444-1460.

\bibitem{ABMP} S. Abramovich, S. Bani\'{c}, M. Mati\'{c} and J. Pe\v{c}ari%
\'{c}, \textit{Jensen-Steffensen's and related inequalities for
superquadratic functions}, Math. Ineq. Appl. 11 (2007), 23-41.

\bibitem{AJS1} S. Abramovich, G. Jameson and G. Sinnamon, \textit{Refining
Jensen's inequality}, Bull. Sci. Math. Roum. 47 (2004), 3-14.

\bibitem{AJS2} S. Abramovich, G. Jameson and G. Sinnamon, \textit{%
Inequalities for averages of convex and superquadratic functions}, JIPAM 5
(4), Article 91 (2004).

\bibitem{BPV} S. Bani\'{c}, J. Pe\v{c}ari\'{c}, and S. Varo\v{s}anec,\textit{%
\ Superquadratic functions and refinements of some classical inequalities},
J. Korean Math. Soc. 45 (2008), no 2, 513-525.

\bibitem{BV} S. Bani\'{c}, and S. Varo\v{s}anec, \textit{Functional
inequalities for superquadratic functions,} Int.J. of Pure and Appl. Math. 
43 (2008), 537-549.

\bibitem{B} G. Bennett, \textit{Meaningful sequences}, Houston J. Math., 
33, (2), (2007), 555-580.

\bibitem{BJ} G. Bennett and G. Jameson, \textit{Monotonic averages of convex
functions}, J. Math. Anal. Appl. 252 (2000), 410-430.

\bibitem{CQCD} C. Chen, F. Qi, P. Cerone and S. S. Dragomir, \textit{%
Monotonicity\ of sequences involving convex and concave functions}, Math.
Ineq. Appl. 6 (2003), 229-239

\bibitem{EP} N. Elezovi\'{c} and J. Pe\v{c}ari\'{c}, \textit{On Alzer's
inequality}, J. Math. Anal. Appl. 223 (1998), 366-369.

\bibitem{GT} A. Gilanyi, K. Troczka-Pawelec, \textit{Regularity of
subquadratic functions}, Manuscript.

\bibitem{OP} J. A. Oguntuase and L-E Persson, \textit{Refinement of Hardy's
Inequalities via superquadratic and subquadratic functions}, J. Math. Anal.
Appl. (2008), 1303-1312.

\bibitem{QG} F. Qi and B.-N. Guo, \textit{Monotonicity\ of sequences
involving convex function and sequence}, Math. Ineq. Appl. 9 (2006), 247-254.
\end{thebibliography}
\end{document}